\newtheorem{theorem}{Theorem}[section]
\newtheorem{lemma}[theorem]{Lemma}
\newtheorem{proposition}[theorem]{Proposition}
\newtheorem{corollary}[theorem]{Corollary}
\theoremstyle{definition}
\newtheorem{example}[theorem]{Example}
\newtheorem{remark}[theorem]{Remark}
\newcommand{\End}{\text{End}}
\newcommand{\Supervect}{\text{Supervect}}
\newcommand{\Ext}{\text{\rm Ext}^1}
\newcommand{\Hom}{\text{Hom}}
\newcommand{\Rep}{\text{Rep}}
\newcommand{\ot}{\otimes}
\newcommand{\ben}{\begin{enumerate}}
\newcommand{\een}{\end{enumerate}}
\newcommand{\C}{{\mathcal C}}
\newcommand{\unit}{{\bf 1}}
\begin{document}

\title[Semisimplicity in symmetric rigid tensor categories]
{Semisimplicity in symmetric rigid tensor categories}


\author{Shlomo Gelaki}
\address{Department of Mathematics, Technion-Israel Institute of
Technology, Haifa 32000, Israel} \email{gelaki@math.technion.ac.il}

\date{\today}

\keywords{symmetric rigid tensor categories, Schur functors}

\dedicatory{Dedicated to Susan Montgomery in honor of her 65th
birthday}

\begin{abstract}
Let $\lambda$ be a partition of a positive integer $n$. Let $\C$ be
a symmetric rigid tensor category over a field $k$ of characteristic
$0$ or $char(k)>n$, and let $V$ be an object of $\C$. In our main
result (Theorem \ref{main}) we introduce a finite set of integers
$F(\lambda)$ and prove that if the Schur functor
$\mathbb{S}_{\lambda}V$ of $V$ is semisimple and the dimension of
$V$ is not in $F(\lambda)$, then $V$ is semisimple. Moreover, we
prove that for each $d\in F(\lambda)$ there exist a symmetric rigid
tensor category $\C$ over $k$ and a non-semisimple object $V\in \C$
of dimension $d$ such that $\mathbb{S}_{\lambda}V$ is semisimple
(which shows that our result is the best possible). In particular,
Theorem \ref{main} extends two theorems of Serre for $\C=\Rep(G)$,
$G$ is a group, and $\mathbb{S}_{\lambda}V$ is $\wedge^n V$ or
$Sym^n V$, and proves a conjecture of Serre (\cite{s1}).
\end{abstract}

\maketitle

\section{Introduction}

Let $G$ be any group, let $k$ be a field and let $\Rep(G)$ be the
category of finite dimensional representations of $G$ over $k$. A
classical result of Chevalley states that in characteristic $0$, the
tensor product $V\otimes W$ of any two semisimple objects $V,W\in
\Rep(G)$ is also semisimple \cite{c}. Later on, Serre proved that
this is also the case in positive characteristic $p$, provided that
$\dim V +\dim W<p-2$ \cite{s2}.

In \cite{s1}, Serre considered the ``converse theorems", and proved
that $V\in \Rep(G)$ is semisimple in each one of the following
situations: there exists $W\in \Rep(G)$ such that $\dim W\ne 0$ in
$k$ and $V\otimes W$ is semisimple (Theorem 2.4, {\em loc.cit}),
$V^{\otimes n}$ is semisimple for some $n\ge 1$ (Theorem 3.4, {\em
loc.cit}), $\wedge^n V$ is semisimple for some $n\ge 1$ and $\dim
V\ne 2,\dots,n$ in $k$ (Theorem 5.2.5, {\em loc.cit}), or $Sym^n V$
is semisimple for some $n\ge 1$ and $\dim V\ne -n,\dots,-2$ (Theorem
5.3.1, {\em loc.cit}).

Furthermore, Serre comments that it is easy to check that all the
above mentioned results from \cite{s1} extend to categories of
linear representations of Lie algebras and restricted Lie algebras
(when $p>0$) ({\em loc.cit}, p. 510). Moreover, Serre explains how
to extend his results Theorem 2.4 and Theorem 3.4 ({\em loc.cit}) to
any symmetric rigid tensor category over $k$, and says on p.511
({\em loc.cit}): ``I have not managed to rewrite the proofs in
tensor category style. Still, I feel that Theorem 5.2.5 on $\wedge^n
V$ and Theorem 5.3.1 on $Sym^n V$ should remain true whenever $n!\ne
0$ in $k$, i.e., $p=0$ or $p>n$." This paper originated in an
attempt to prove this conjecture of Serre.

A further natural generalization of Serre's results would be to
consider any Schur functor $\mathbb{S}_{\lambda}$, and not only
$\wedge^n$ and $Sym^n$. Namely, to look for an extension of Theorem
5.2.5 and Theorem 5.3.1 in \cite{s1}, where $\C$ is any symmetric
rigid tensor category over $k$, and $V\in \C$ is an object for which
$\mathbb{S}_{\lambda}V$ is semisimple for some partition $\lambda$
of $n$. This is precisely the main purpose of this paper.

The paper is organized as follows. In Section 2 we note that in fact
Theorem 2.4 from \cite{s1} holds in a much more general situation
than the symmetric one. More precisely, let $\mathcal{C}$ be {\em
any} rigid tensor category, and suppose that $W\in\C$ is isomorphic
to its double dual $W^{**}$ via an isomorphism $i$. This allows to
define a scalar $\dim_i(W)$ in $k$, and we show that if
$\dim_i(W)\ne 0$ and $V\otimes W$ is semisimple, then $V$ is
semisimple (see Theorem \ref{s3}). Examples, other than
$\C=\Rep(G)$, are given by braided rigid tensor categories $\C$ and
by representation categories $\C$ of Hopf algebras whose squared
antipode is inner.

In Section 3 we note that Theorem 3.3 and Corollary 3.4 from
\cite{s1} hold in a much more general situation than the symmetric
one, as well. More precisely, let $\mathcal{C}$ be {\em any} rigid
tensor category satisfying the {\em commutativity condition}, and
let $V\in \C$. We show that if $V^{\otimes n}\otimes V^{*\otimes m}$
is semisimple for some $m,n\ge 0$, not both equal to $0$, then $V$
is semisimple. In particular, if $V^{\otimes n}$ is semisimple for
some $n\ge 1$ then $V$ is semisimple (see Theorem \ref{tensor}).
Examples, other than $\C=\Rep(G)$, are given by braided rigid tensor
categories $\C$.

In Section 4 we state the main result of the paper (Theorem
\ref{main}), and prove various results in preparation for its proof.
Our main result extends Theorem 5.2.5 on $\wedge^n$ and Theorem
5.3.1 on $Sym^n$ in the group case $\C=\Rep(G)$ \cite{s1}, to any
symmetric rigid tensor category $\C$ over $k$ and any Schur functor
$\mathbb{S}_{\lambda}$ (so, in particular, it provides a proof to
the conjecture of Serre (\cite{s1}, p.511)). More precisely, let
$\lambda$ be a partition of a positive integer $n$, and assume that
$char(k)=0$ or $char(k)>n$. Let $\mathbb{S}_{\lambda}$ be the
associated Schur functor (see \cite{d2}) and let $V$ be an object of
$\C$. In Theorem \ref{main} we introduce a finite set of integers
$F(\lambda)$ and prove that if the dimension of $V$ is not equal in
$k$ to an element of $F(\lambda)$ and $\mathbb{S}_{\lambda}V$ is
semisimple, then $V$ is semisimple. Moreover, we prove that for each
$d\in F(\lambda)$ there exist a symmetric rigid tensor category $\C$
over $k$ and a non-semisimple object $V\in \C$ of dimension $d$ such
that $\mathbb{S}_{\lambda}V$ is semisimple (which shows that our
result is the best possible).

Section 5 is devoted to the proof of Theorem \ref{main}.

All tensor categories will be assumed to be rigid, $k$-linear
Abelian, with finite dimensional $\Hom$ spaces, such that every
object has a finite length, and $\End(\unit)=k$.

{\bf Acknowledgments.} The author is grateful to Pavel Etingof for
his encouragement, for his interest in the problem, and for
stimulating and helpful discussions.

The author was supported by The Israel Science Foundation (grant No.
317/09).

\section{From $V\otimes W$ to $V$ in rigid
tensor categories}

Let $\mathcal{C}$ be a rigid tensor category. For an object $V\in
\mathcal{C}$ we let
$$coev_V: \unit \to V\otimes V^*\,\,\text{and}\,\, ev_V:
V^*\otimes V\to \unit$$ denote the coevaluation and evaluation maps
associated to $V$, respectively. Recall that $$(id_V\otimes
ev_V)\circ (coev_V\ot id_V)=id_V.$$

The following two propositions were proved by Serre for
$\mathcal{C}:=\Rep(G)$, $G$ any group \cite{s1}. However, it is
straightforward to verify that the same proofs work in any rigid
tensor category $\mathcal{C}$.

\begin{proposition}\label{s1} (\cite{s1}, Proposition 2.1)
Let $V,W\in \mathcal{C}$, and let $V'$ be a sub-object of $V$.
Assume that $coev_{W}: \unit \to W\otimes W^*$ and $V'\otimes W\to
V\otimes W$ are split injections. Then $V'\to V$ is a split
injection.
\end{proposition}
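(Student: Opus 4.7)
The plan is to build an explicit retraction $s : V \to V'$ of the inclusion $\iota : V' \hookrightarrow V$ by combining the two given splittings in order to ``cancel'' the object $W$.

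First I would name the data. Let $\sigma : W \otimes W^* \to \unit$ be a retraction of $coev_W$, so $\sigma \circ coev_W = \id_\unit$, and let $r : V \otimes W \to V' \otimes W$ be a retraction of $\iota \otimes \id_W$, so $r \circ (\iota \otimes \id_W) = \id_{V' \otimes W}$. Then I would define
$$s \;:\; V \xrightarrow{\;\id_V \otimes coev_W\;} V \otimes W \otimes W^* \xrightarrow{\;r \otimes \id_{W^*}\;} V' \otimes W \otimes W^* \xrightarrow{\;\id_{V'} \otimes \sigma\;} V',$$
where I identify $V' \otimes \unit$ with $V'$ via the unit constraint.

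The verification that $s \circ \iota = \id_{V'}$ is then a short diagram chase. Using functoriality of $\otimes$ to pull $\iota$ past $coev_W$, one rewrites
$$(\id_V \otimes coev_W) \circ \iota \;=\; (\iota \otimes \id_{W \otimes W^*}) \circ (\id_{V'} \otimes coev_W),$$
and substituting into $s \circ \iota$ lets the composition $(r \otimes \id_{W^*}) \circ (\iota \otimes \id_{W \otimes W^*})$ collapse to $\id_{V' \otimes W \otimes W^*}$ by the defining property of $r$. What remains is $(\id_{V'} \otimes \sigma) \circ (\id_{V'} \otimes coev_W) = \id_{V'} \otimes (\sigma \circ coev_W) = \id_{V'}$.

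There is no substantive obstacle here: the argument is purely formal and uses only the rigid tensor structure together with the two hypotheses on splittings; in particular one never needs a symmetry, braiding, or any finiteness assumption beyond the existence of the coevaluation map. The only thing to be careful about is bookkeeping of the associativity and unit constraints, which I would suppress (as is standard) by invoking Mac Lane's coherence.
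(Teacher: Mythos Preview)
Your proof is correct and is precisely the argument the paper has in mind: the paper does not reproduce a proof but simply remarks that Serre's original argument for $\Rep(G)$ carries over verbatim to any rigid tensor category, and what you wrote is exactly that argument.
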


\begin{proposition}\label{s2} (\cite{s1}, Proposition 2.3)
Assume $coev_{W}: \unit \to W\otimes W^*$ is a split injection and
that $V\otimes W$ is semisimple. Then $V$ is semisimple.
\end{proposition}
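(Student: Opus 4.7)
The plan is to reduce directly to Proposition \ref{s1} by checking the characterization of semisimplicity in terms of splitting of sub-object inclusions.

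Since every object of $\C$ has finite length, an object is semisimple if and only if every sub-object inclusion is a split injection. Thus it suffices to show that for an arbitrary sub-object $V' \subseteq V$, the inclusion $V' \hookrightarrow V$ splits.

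First I would observe that since tensoring with $W$ is an exact functor on $\C$ (this is automatic in a rigid tensor category, as $-\otimes W$ has both left and right adjoints given by $-\otimes W^*$), the map $V' \otimes W \to V \otimes W$ is again a sub-object inclusion. By the hypothesis that $V \otimes W$ is semisimple, this inclusion is a split injection. Combined with the assumption that $coev_W : \unit \to W \otimes W^*$ is a split injection, Proposition \ref{s1} applies and yields that $V' \to V$ is itself a split injection.

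Since $V' \subseteq V$ was an arbitrary sub-object, we conclude that $V$ is semisimple. The argument is essentially a formal consequence of Proposition \ref{s1}, so there is no real obstacle; the only point to be careful about is the equivalence between semisimplicity of a finite-length object and the splitting of every sub-object inclusion, which holds in any $k$-linear abelian category with the finite length hypothesis imposed at the end of the introduction.
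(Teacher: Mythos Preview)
Your proof is correct and follows essentially the same approach as the paper, which simply notes that Serre's original argument for $\Rep(G)$ carries over verbatim to any rigid tensor category: reduce semisimplicity to splitting of an arbitrary sub-object $V'\hookrightarrow V$, use exactness of $-\otimes W$ and semisimplicity of $V\otimes W$ to split $V'\otimes W\to V\otimes W$, and then invoke Proposition~\ref{s1}. One tiny imprecision: the left and right adjoints of $-\otimes W$ are tensoring with the left and right duals of $W$, which need not coincide in a non-pivotal rigid category, but since both exist the functor is still exact and your argument goes through unchanged.
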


One instance in which $coev_{W}: \unit \to W\otimes W^*$ is a split
injection is the following. Assume that $W\in \mathcal{C}$ is
isomorphic to its double dual $W^{**}$, and fix an isomorphism
$i:W\to W^{**}$. This allows us to define the (quantum) dimension
$\dim_i(W)$ of $W$ (relative to $i$) as the composition
$$\dim_i(W):= ev_{W^*}\circ (i\otimes id_{W^*})\circ coev_{W}.$$ Note that
$\dim_i(W)\in \End(\unit)=k$. Now, clearly if $\dim_i(W)\ne 0$ in
$k$, then $coev_{W}$ is a split injection. (See Remark 2.2 in
\cite{s1}.)

As a consequence of Proposition \ref{s1} and Proposition \ref{s2} we
have the following theorem, which generalizes Theorem 2.4 in
\cite{s1}.

\begin{theorem}\label{s3}
Assume that $W\in\C$ is isomorphic to its double dual $W^{**}$ and
let $i:W\to W^{**}$ be an isomorphism. If $V\otimes W$ is semisimple
and $\dim_i(W)\ne 0$ in $k$ then $V$ is semisimple.
\end{theorem}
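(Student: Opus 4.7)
The plan is to reduce immediately to Proposition \ref{s2}, whose hypothesis requires that $coev_W : \unit \to W \otimes W^*$ be a split injection. Since $V \otimes W$ is already assumed semisimple, the only genuine task is to exhibit a retraction of $coev_W$ built from the isomorphism $i : W \to W^{**}$ and the nonvanishing of $\dim_i(W)$.

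First I would consider the morphism $r := ev_{W^*} \circ (i \otimes id_{W^*}) : W \otimes W^* \to \unit$ that already appears in the definition of $\dim_i(W)$. By that very definition, the composite $r \circ coev_W \in \End(\unit) = k$ equals the scalar $\dim_i(W)$. Since $\dim_i(W) \ne 0$ in $k$ by hypothesis, the morphism $\dim_i(W)^{-1} \cdot r$ is a left inverse to $coev_W$ in $\mathcal{C}$, so $coev_W$ is a split injection.

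With $coev_W$ now known to be split injective and $V \otimes W$ semisimple, Proposition \ref{s2} applies verbatim and yields that $V$ is semisimple. I do not foresee any real obstacle: the theorem is essentially a formal consequence of Proposition \ref{s2} combined with the explicit splitting observation already noted in the paragraph immediately preceding the statement, which is why the paper flags it merely as a generalization of Serre's Theorem 2.4 in \cite{s1}. The only point deserving a moment of care is to verify that division by the scalar $\dim_i(W)$ indeed produces a morphism in $\mathcal{C}$, which follows from $k$-linearity of the category.
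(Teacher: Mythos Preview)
Your proposal is correct and matches the paper's own argument essentially verbatim: the paper also observes in the paragraph preceding the theorem that $\dim_i(W)\ne 0$ forces $coev_W$ to split, and then states the theorem as an immediate consequence of Proposition~\ref{s2} (and Proposition~\ref{s1}, on which it rests). There is nothing to add.
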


\begin{remark}
1) It is known that if $\C$ is braided, any object $W$ is isomorphic
to its double dual $W^{**}$. So in particular, if $H$ is a
quasitriangular (quasi)Hopf algebra over $k$ and $V,W\in \Rep(H)$
such that $V\otimes W$ is semisimple and $\dim W\ne 0$ in $k$, then
$V$ is semisimple. The converse is not true.

2) If $H$ is a Hopf algebra whose squared antipode $S^2$ is inner
(e.g., $S^2=id$) then any $W\in \Rep(H)$ is isomorphic to $W^{**}$.
Therefore Theorem \ref{s3} holds for $\Rep(H)$.

3) When $\C$ is symmetric, Serre already pointed out that Theorem
2.4 in \cite{s1} holds for $\C$, with the same proof (see p.
510--511 in \cite{s1}).
\end{remark}

\section{From $V^{\otimes n}\otimes V^{*\otimes m}$ to $V$ in rigid
tensor categories}

The following theorem was proved by Serre for
$\mathcal{C}:=\Rep(G)$, $G$ any group \cite{s1}. Serre also explains
that the same proof works in any symmetric rigid tensor category
$\C$. In fact, the symmetry is used only to guarantee that for any
$V\in \mathcal{C}$ the morphism
$$id_V\ot coev_V:V\to V\otimes V\otimes V^*$$ is a split injection.
We just note that in fact this is the case in any rigid tensor
category $\C$ satisfying the following {\em commutativity
condition}: there exists a functorial isomorphism $c: \otimes\to
\otimes^{op}$ such that $c_{V\ot\unit}=c_{\unit\ot V}=id_V$ for any
$V\in \C$ (e.g., $\C$ is braided, not necessarily symmetric).
Indeed, let $\C$ be a rigid tensor category satisfying the
coboundary condition. Then, using the naturality of $c$, one has
\begin{equation}\label{yan}
(id_V\otimes ev_V)\circ c_{V,V\ot V^*}\circ (id_V\ot
coev_V)=(id_V\otimes ev_V)\circ (coev_V\ot id_V)=id_V.
\end{equation}

Therefore we have the following result, which generalizes Theorem
3.3 and Corollary 3.4 in \cite{s1}.

\begin{theorem}\label{tensor}
Let $\C$ be a rigid tensor category satisfying the commutativity
condition, and let $V\in \C$. If $V^{\otimes n}\otimes V^{*\otimes
m}$ is semisimple for some $m,n\ge 0$, not both equal to $0$, then
$V$ is semisimple. In particular, if $V^{\otimes n}$ is semisimple
for some $n\ge 1$ then $V$ is semisimple. \qed
\end{theorem}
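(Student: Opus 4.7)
The plan is to run Serre's proof of Theorem~3.3 and Corollary~3.4 in \cite{s1}, replacing the symmetric braiding by the commutativity isomorphism $c$. As explained in the paragraph preceding the theorem, the only role of symmetry in Serre's argument is to supply the split injection $id_V \otimes coev_V: V \to V \otimes V \otimes V^*$, which is now provided in our setting by (\ref{yan}).

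The first step is a reduction. From (\ref{yan}), $V$ is a direct summand of $V \otimes V \otimes V^*$; tensoring this decomposition with the identity on $V^{\otimes(n-2)} \otimes V^{*\otimes(m-1)}$ and rearranging factors via $c$ realizes $V^{\otimes(n-1)} \otimes V^{*\otimes(m-1)}$ as a direct summand of $V^{\otimes n} \otimes V^{*\otimes m}$ whenever $n \ge 2$ and $m \ge 1$. The analogous argument applied to $V^*$ (which, by the commutativity condition, satisfies $V^{**} \cong V$) handles $n \ge 1, m \ge 2$. Iterating, semisimplicity of $V^{\otimes n} \otimes V^{*\otimes m}$ descends to that of $V^{\otimes(n-k)} \otimes V^{*\otimes(m-k)}$ for $k \le \min(n,m)$. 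Combining this with the fact that duals of semisimple objects are semisimple in any rigid tensor category, the theorem is reduced to two base cases: $V \otimes V^*$ semisimple, and $V^{\otimes \ell}$ semisimple for some $\ell \ge 2$.

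The first base case is dispatched by Proposition~\ref{s1} with $W = V^*$: for any sub-object $V' \subseteq V$, the map $V' \otimes V^* \to V \otimes V^*$ is a split injection by semisimplicity, and $coev_{V^*}: \unit \to V^* \otimes V^{**}$ is a split injection because $V^* \otimes V^{**}$ is, via $c$, isomorphic to the semisimple $V \otimes V^*$ and $\unit$ is a sub-object of the latter (hence a direct summand). For the remaining case $V^{\otimes \ell}$ semisimple with $\ell \ge 2$, one follows Serre's argument: iterating (\ref{yan}) embeds $V$ as a direct summand of a suitable object built from $V^{\otimes \ell}$ and $V^{*\otimes(\ell-1)}$, and a further application of Proposition~\ref{s1} (or Proposition~\ref{s2}) completes the proof. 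The main obstacle lies precisely in this last step: the splitting morphisms, which in Serre's setting came for free from the symmetric braiding, must here be constructed entirely from $c$ and the (co)evaluations. Since $c$ is natural with the unit conditions $c_{\unit, V} = c_{V, \unit} = id_V$---the only structural features of a symmetric braiding that Serre's proof actually uses---the construction carries over verbatim, which is what makes (\ref{yan}) a sufficient substitute for symmetry.
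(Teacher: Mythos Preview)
Your approach matches the paper's exactly: both simply observe that Serre's argument from \cite{s1} goes through once one has the split injection $id_V\otimes coev_V:V\to V\otimes V\otimes V^*$, which is supplied by (\ref{yan}). The paper in fact gives no further detail than this and closes with \qed.

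Your attempt to flesh out the reduction is mostly fine, but one step is not justified: you assert that the commutativity condition yields $V^{**}\cong V$. The commutativity condition as stated in the paper is only a natural isomorphism $c:\otimes\to\otimes^{\op}$ with $c_{V\otimes\unit}=c_{\unit\otimes V}=id_V$; no hexagon is assumed, and without it the usual argument that $V$ is a right dual of $V^*$ does not go through. Fortunately this claim is avoidable. For the reduction with $m\ge 2$ you can instead use the second zigzag identity $(ev_V\otimes id_{V^*})\circ(id_{V^*}\otimes coev_V)=id_{V^*}$, which already exhibits $V^*$ as a retract of $V^*\otimes V\otimes V^*$ with no commutativity needed. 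For the base case $V\otimes V^*$ semisimple, argue instead that $coev_V:\unit\to V\otimes V^*$ splits (the target is semisimple and $\unit$ is simple), then apply Proposition~\ref{s2} to $V^*$ with $W=V$, using $c$ to identify $V^*\otimes V$ with the semisimple $V\otimes V^*$; this gives $V^*$ semisimple, hence $V$ semisimple. With these repairs your outline is sound and coincides with what the paper intends.
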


\section{from $\mathbb{S}_{\lambda}V$
to $V$ in symmetric rigid tensor categories}

In this section we assume that $\C$ is a {\em symmetric} rigid
tensor category over a field $k$, with a commutativity constraint
$c$ (see e.g., \cite{d1}, \cite{d2}).

\subsection{Schur functors in $\C$.}
Recall that given an object $X\in \C$ and a nonnegative integer $m$,
the symmetric group $S_m$ acts on $X^{\ot m}$ via the symmetry $c$.
Let $\beta$ be a partition of $m$, and assume that $char(k)>m$ if
$char(k)\ne 0$. Let $V_{\beta}$ be the corresponding irreducible
representation of $S_m$ and let $c_{\beta}\in k[S_m]$ be a Young
symmetrizer associated with $V_{\beta}$. Then $c_{\beta}$ gives rise
to a functor
$$c_{\beta}:\C\to \C,\,\,X\mapsto c_{\beta}(X^{\otimes
m}).$$ Recall that the isomorphism type of the functor $c_{\beta}$
does not depend on the choice of $c_{\beta}$. We shall call
$\mathbb{S}_{\beta}X:=c_{\beta}(X^{\otimes m})\subseteq X^{\otimes
m}$ the {\em Schur functor} of $X$ associated with $\beta$.

Schur functors in symmetric rigid tensor categories were introduced
(more conceptually) and studied by Deligne in \cite{d2}. Among many
other things, it is proved there that for any object $X\in \C$,
$(\mathbb{S}_{\beta}X)^*$ is canonically isomorphic to
$\mathbb{S}_{\beta}X^*$, a fact we shall use often in the sequel.

\begin{example}
Note in particular that $\mathbb{S}_{(0)}X=\unit$,
$\mathbb{S}_{(1)}X=X$, $\mathbb{S}_{(m)}X= Sym^mX$ and
$\mathbb{S}_{(1^m)}X= \wedge^mX$.
\end{example}

\subsection{The main result.}
Our goal is to generalize Theorem 5.2.5 and Theorem 5.3.1 from
\cite{s1} by replacing representations categories $\Rep(G)$ of
groups by any symmetric rigid tensor category $\C$, and by replacing
the Schur functors $\wedge^n$, $Sym^n$ by any Schur functor. More
precisely, let $\lambda$ be a partition of a positive integer $n$
and let $V\in \C$. Our goal is to find out when the semisimplicity
of $\mathbb{S}_{\lambda}V$ implies the semisimplicity of $V$, in
terms of the dimension of $V$ only.

Fix a partition $\lambda$ of a positive integer $n$, with
$p:=p(\lambda)$ rows and $q:=q(\lambda)$ columns, and let $(i,j)$
number the row and column of boxes for the Young diagram of
$\lambda$. Let us introduce some notation.

\begin{itemize}
\item Let $R(\lambda)$ denote the integral interval $\{-q,\dots,p\}$,
and let $T(\lambda)\subseteq R(\lambda)$ include $0$ if $\lambda$ is
a hook (i.e., $(2,2)\notin \lambda$), $1$ if $(3,2)\notin \lambda$,
$-1$ if $(2,3)\notin \lambda$, and $-q,p$ if $\lambda$ is not a
rectangle. Set $F(\lambda):=R(\lambda)\setminus T(\lambda)$.

\item Let $G(\lambda)$ denote the set of all values $d$ in $k$ for
which there exists a symmetric rigid tensor category $\C$ over $k$
with a non-semisimple object $V$ of dimension $d$ such that
$\mathbb{S}_{\lambda}V$ is semisimple.
\end{itemize}

\begin{remark}\label{trick}
1) We have that $F(\lambda)=-F(\lambda^*)$, where $\lambda^*$ is the
conjugate of $\lambda$.

2) We have that $G(\lambda)=-G(\lambda^*)$. Indeed, if $(\C,V)$ is a
counterexample for $(\lambda,d)$ (i.e., $\C$ is a symmetric rigid
tensor category over $k$ with a non-semisimple object $V$ of
dimension $d$ such that $\mathbb{S}_{\lambda}V$ is semisimple) then
$(\C\boxtimes \Supervect,V\otimes \unit^{-1})$ is a counterexample
for $(\lambda^*,-d)$, where $\Supervect$ is the category of finite
dimensional super vector spaces over $k$ and
$\unit^{-1}\in\Supervect$ is the odd $1-$dimensional space.
\end{remark}

We can now state our main result concisely.

\begin{theorem}\label{main}
Let $n$ be a positive integer, $n<char(k)$ in case $char(k)\ne 0$,
and let $\lambda$ be a partition of $n$. Then the sets $F(\lambda)$
and $G(\lambda)$ coincide (where we view the relevant integers as
elements of $k$ in an obvious way).
\end{theorem}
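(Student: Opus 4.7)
My plan is to prove the two inclusions $F(\lambda)\subseteq G(\lambda)$ and $G(\lambda)\subseteq F(\lambda)$ separately, using rather different techniques in each direction. Throughout I would exploit Remark \ref{trick}: the identities $F(\lambda)=-F(\lambda^*)$ and $G(\lambda)=-G(\lambda^*)$ let me focus on one of $\lambda,\lambda^*$ and transport to the other by tensoring with $\Supervect$.

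For the constructive inclusion $F(\lambda)\subseteq G(\lambda)$, I would exhibit a counterexample pair $(\C,V)$ separately for each kind of value in $F(\lambda)$. For an interior $d\in R(\lambda)\setminus\{-q,p\}$ excluded from $T(\lambda)$ by the failure of a hook or near-hook clause, the Schur dimension polynomial (hook-length formula in $d$) vanishes at that $d$; any non-semisimple $V$ of dimension $d$ inside $\Rep(G)$ for a suitable unipotent or non-reductive $G$ (possibly tensored with $\Supervect$ to realize the correct sign) then gives $\mathbb{S}_\lambda V=0$ and supplies the required counterexample. For the endpoint values $d=-q$ or $d=p$ that remain in $F(\lambda)$ precisely when $\lambda$ is a rectangle, $\mathbb{S}_\lambda V$ becomes one-dimensional on a generic $V$ of that dimension (hence automatically semisimple), so taking $V$ to be a non-split extension inside $\Rep(B)$ for a Borel subgroup $B$ (or an equally elementary construction) produces what is needed.

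For the forcing inclusion $G(\lambda)\subseteq F(\lambda)$, I would assume $\dim V = d\notin F(\lambda)$ in $k$ and $\mathbb{S}_\lambda V$ semisimple, and aim to exhibit an auxiliary object $W\in\C$ with $\dim W\neq 0$ in $k$ and $V\otimes W$ semisimple, after which Theorem \ref{s3} finishes the proof. The natural family of candidates is $W=\mathbb{S}_\mu V\otimes\mathbb{S}_\nu V^*$ for $\mu,\nu$ obtained from $\lambda$ by adding or removing a small number of boxes. Using Deligne's canonical isomorphism $(\mathbb{S}_\lambda V)^*\cong\mathbb{S}_\lambda V^*$ together with Pieri and the Littlewood--Richardson rule, the product $V\otimes W$ would decompose into summands that reduce, via split extraction, to tensor powers of $\mathbb{S}_\lambda V$ up to cofactors whose categorical dimensions are polynomials in $d$ whose integer roots are designed to match $F(\lambda)$ exactly; the hypothesis $d\notin F(\lambda)$ is precisely what makes these cofactors invertible and the reduction go through. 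The base cases $\lambda=(n)$ and $\lambda=(1^n)$ (that is, $Sym^n$ and $\wedge^n$) would be settled first by a direct categorical rewriting of Serre's arguments, thereby lifting his Theorems 5.2.5 and 5.3.1 of \cite{s1} to an arbitrary symmetric rigid tensor category and, along the way, proving his conjecture.

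The main obstacle will be the forcing direction. The delicate combinatorial task is choosing $\mu,\nu$ so that $\dim W$ as a polynomial in $d$ has roots matching $F(\lambda)$ precisely, and then checking that the Pieri/Littlewood--Richardson expansion of $V\otimes W$ really splits off as a direct summand of tensor powers of the semisimple object $\mathbb{S}_\lambda V$; this is where the unusual combinatorics of $T(\lambda)$ (the hook, near-hook, and rectangle clauses) must emerge, rather than be asserted. A secondary obstacle is that the base cases $\wedge^n$ and $Sym^n$, which Serre handled only in the group-theoretic setting, demand genuinely new tensor-categorical proofs, as he himself noted in \cite{s1}.
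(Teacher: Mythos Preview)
Your plan for the inclusion $F(\lambda)\subseteq G(\lambda)$ is essentially the paper's: super vector spaces $\mathbb{C}^{r\mid s}$ with $(r+1,s+1)\in\lambda$ to kill $\mathbb{S}_\lambda V$, and the rectangle observation for the endpoints $p,-q$, transported via Remark~\ref{trick}.

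The forcing direction, however, has a genuine gap. You want to manufacture $W$ with $\dim W\neq 0$ and $V\otimes W$ semisimple, then invoke Theorem~\ref{s3}. But from the sole hypothesis that $\mathbb{S}_\lambda V$ is semisimple there is no mechanism, in a general symmetric rigid tensor category, to conclude that \emph{any} tensor product involving $V$ (or even $(\mathbb{S}_\lambda V)^{\otimes 2}$) is semisimple: Chevalley's theorem is not available here. So ``$V\otimes W$ decomposes into summands that reduce, via split extraction, to tensor powers of $\mathbb{S}_\lambda V$'' does not help, since those tensor powers themselves are not known to be semisimple. What the splitting map $\theta_{\alpha,\lambda}$ actually gives (when ${\rm cp}_\lambda(d)\neq 0$) is that $V$ is a direct summand of $\mathbb{S}_\lambda V\otimes\mathbb{S}_\alpha V^*$; but the semisimplicity of that product is exactly what you cannot get. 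Your proposed base cases $\wedge^n$, $Sym^n$ face the same obstruction, which is precisely why Serre could not ``rewrite the proofs in tensor category style''.

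The paper avoids this trap entirely: it never proves any tensor product is semisimple. Instead it fixes a short exact sequence $0\to A\to V\to B\to 0$, sets $a=\dim A$, $b=\dim B$, and uses the induced filtration on $\mathbb{S}_\lambda V$. Semisimplicity of $\mathbb{S}_\lambda V$ forces each two-step subquotient $(V_i^2)$ to split (Lemma~\ref{splits}); then, via the morphism of extensions in Lemma~\ref{l} and the $\Ext$ manipulations of Lemmas~\ref{ext}, \ref{cor}, \ref{ll}, one pulls this splitting back to $(V)=0$ in $\Ext(B,A)$, provided one can choose $i$ and partitions $\mu',\mu,\nu,\nu'$ so that $p_{\mu',\mu}(a)\neq 0$ and $p_{\nu,\nu'}(b)\neq 0$ (Proposition~\ref{mainp}). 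The case analysis in Section~5.2 is then a two-parameter search in $(a,b)$, with the index $i$ chosen as a function of $b$; the shape clauses defining $T(\lambda)$ emerge exactly from the residual constraints on $a+b$. This bi-parametrization in the dimensions of the subobject and quotient is the missing idea in your outline.
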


\begin{example}
Let $\C$ be a symmetric rigid tensor category over $k$, and let
$V\in \C$.

1) Theorem \ref{main} implies for $\lambda=(1^n)$ (respectively,
$\lambda=(n)$), that if $\mathbb{S}_{\lambda}V$ is semisimple and
the dimension of $V$ is not equal in $k$ to an integer in the range
$2,\dots,n$ (respectively, $-n,\dots,-2$), then $V$ is semisimple.
For $\C=\Rep(G)$, $G$ is any group, this is Theorem 5.2.5 from
\cite{s1} (respectively, Theorem 5.3.1 from \cite{s1}).

2) Theorem \ref{main} implies that if $\mathbb{S}_{(2,1)}V$ is
semisimple then so is $V$.
\end{example}

The proof of Theorem \ref{main} is given in Section 5. The rest of
this section is devoted to preparations for the proof.

\subsection{Traces in $\C$.}
For an object $X\in \mathcal{C}$, let $$\widetilde{ev}_X:=ev_X\circ
c_{X,X^*}: X\ot X^*\to \unit.$$ Recall that the dimension $\dim X\in
k$ of $X$ is defined by $$\dim X:=\widetilde{ev}_X\circ
coev_X:\unit\to \unit.$$

In \cite{jsv} it is explained that the family of functions
$$Tr_{A,B}^U:\Hom(A\ot U,B\ot U)\to \Hom(A,B),\,\,\, A,B,U\in \C,$$
defined by
\begin{equation}
\label{trace} Tr_{A,B}^U(f):A \xrightarrow{id_A\ot coev_U} A\ot U\ot
U^* \xrightarrow{f\ot id_{U^*}} B\ot U\ot U^*\xrightarrow{id_B\ot
\widetilde{ev}_U} B,
\end{equation}
is natural in $U$, $A$ and $B$, and satisfies the following property
(among other properties)
\begin{equation}\label{van}
Tr_{A,B}^{U\ot W}(f)=Tr_{A,B}^{U}(Tr_{A\ot U,B\ot U}^{W}(f)).
\end{equation}

Clearly, $Tr_{\unit,\unit}^U(id_U)=\dim U$.

We have the following two easy lemmas.
\begin{lemma}\label{trace1}
Let $f:A\ot U\to B\ot W$ and $g:W\to U$ be morphisms. Then
$Tr_{A,B}^U((id_B\ot g)f)=Tr_{A,B}^W(f(id_A\ot g))$.
\end{lemma}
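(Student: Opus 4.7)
The plan is to unfold the definition of $Tr_{A,B}^U$ from (\ref{trace}) on both sides of the asserted equality and reduce the identity to two elementary duality statements relating $g$ and its dual $g^\ast:U^\ast\to W^\ast$, namely
\[
\widetilde{ev}_U \circ (g \ot id_{U^\ast}) = \widetilde{ev}_W \circ (id_W \ot g^\ast),
\]
\[
(id_U \ot g^\ast) \circ coev_U = (g \ot id_{W^\ast}) \circ coev_W.
\]
The first identity follows from the usual naturality of $ev$ with respect to $g$ combined with the naturality of the symmetry $c$ that is built into the definition of $\widetilde{ev}$, while the second is precisely the naturality of $coev$ applied to $g$.

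Concretely, I would first expand the left-hand side as
\[
Tr_{A,B}^U\bigl((id_B \ot g) f\bigr) \;=\; (id_B \ot \widetilde{ev}_U)\circ (id_B \ot g \ot id_{U^\ast})\circ (f \ot id_{U^\ast})\circ (id_A \ot coev_U),
\]
and the right-hand side analogously. Applying the first duality identity above (tensored with $id_B$) replaces the leading block $(id_B \ot \widetilde{ev}_U)\circ (id_B \ot g \ot id_{U^\ast})$ by $(id_B \ot \widetilde{ev}_W)\circ (id_B \ot id_W \ot g^\ast)$. The newly introduced $g^\ast$ acts only on the last tensor factor, so it slides past $f \ot id_{U^\ast}$ by functoriality of $\ot$. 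Finally the second identity converts $(id_U \ot g^\ast)\circ coev_U$ into $(g \ot id_{W^\ast})\circ coev_W$, after which the composite is visibly the expansion of $Tr_{A,B}^W(f\circ (id_A \ot g))$.

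No substantive obstacle is expected: the statement is really the dinaturality (or ``cyclicity'') of the categorical trace in its traced variable, and once the defining formula is opened up the manipulation amounts to sliding $g$ across a single strand of the trace diagram using rigidity together with the symmetry $c$. The only point that deserves a moment of care is the presence of $c$ hidden inside $\widetilde{ev}$, which is handled by naturality of $c$ as explained above.
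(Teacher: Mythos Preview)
Your proposal is correct and is essentially the same approach as the paper's. The paper's proof is the single sentence ``Follows from the naturality of $Tr$ in $U$,'' invoking the dinaturality of the categorical trace established in \cite{jsv}; you have simply unpacked that dinaturality by hand, and your two displayed identities together with the sliding argument are exactly the standard verification.
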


\begin{proof}
Follows from the naturality of $Tr$ in $U$.
\end{proof}

\begin{lemma}\label{trace2}
Let $f:A\ot U\to B\ot U$ and $g:W\to W$ be morphisms. Then
$Tr_{A,B}^U(f)\ot Tr_{\unit,\unit}^W(g)=Tr_{A,B}^{U\ot W}(f\ot g)$.
\end{lemma}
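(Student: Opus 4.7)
The plan is to reduce the identity to the iterative property \eqref{van} of the partial trace, combined with the interchange law in a tensor category. First, I would apply \eqref{van} to the morphism $f\ot g:A\ot U\ot W\to B\ot U\ot W$, obtaining
$$Tr_{A,B}^{U\ot W}(f\ot g)=Tr_{A,B}^U\bigl(Tr_{A\ot U,B\ot U}^W(f\ot g)\bigr),$$
which replaces the full trace by an inner trace over $W$ of a morphism on $(A\ot U)\ot W$.

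Next, I would unwind the inner trace using the definition \eqref{trace}:
$$Tr_{A\ot U,B\ot U}^W(f\ot g)=(id_{B\ot U}\ot\widetilde{ev}_W)\circ\bigl((f\ot g)\ot id_{W^*}\bigr)\circ(id_{A\ot U}\ot coev_W).$$
Since $f$ acts only on the first two tensor factors while $coev_W$, $g$ and $\widetilde{ev}_W$ act only on the last two, the interchange law $(\varphi_1\ot\varphi_2)\circ(\psi_1\ot\psi_2)=(\varphi_1\circ\psi_1)\ot(\varphi_2\circ\psi_2)$ allows me to factor this composition as $f\ot Tr_{\unit,\unit}^W(g)$.

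Finally, because $Tr_{\unit,\unit}^W(g)\in\End(\unit)=k$ is a scalar, tensoring by it commutes with the trace $Tr_{A,B}^U$, so
$$Tr_{A,B}^U\bigl(f\ot Tr_{\unit,\unit}^W(g)\bigr)=Tr_{A,B}^U(f)\ot Tr_{\unit,\unit}^W(g),$$
which is the claimed identity. I do not foresee any serious obstacle; the only delicate point is the routine bookkeeping of the canonical unit isomorphisms $X\cong X\ot\unit$ used when identifying $f$ with $f\ot id_\unit$ before tensoring by a scalar, but this causes no substantive difficulty.
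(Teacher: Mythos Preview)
Your argument is correct. The route differs slightly from the paper's: the paper unwinds $Tr_{A,B}^{U\ot W}$ directly from the definition \eqref{trace}, using the explicit structural formulas $coev_{U\ot W}=(id_U\ot c_{U^*,W\ot W^*})\circ(coev_U\ot coev_W)$ and $\widetilde{ev}_{U\ot W}=(\widetilde{ev}_U\ot \widetilde{ev}_W)\circ(id_U\ot c_{W\ot W^*,U^*})$ to separate the $U$- and $W$-contributions in one step. You instead invoke the iterative property \eqref{van} first, reducing to the inner trace $Tr_{A\ot U,B\ot U}^W(f\ot g)$, and then use the interchange law to factor it as $f\ot Tr_{\unit,\unit}^W(g)$ and pull out the scalar. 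Your approach has the advantage of avoiding any explicit manipulation of the symmetry $c$ (that work is hidden inside the cited property \eqref{van}), while the paper's direct computation is self-contained and does not rely on \eqref{van}. Both are short and entirely routine.
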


\begin{proof}
Follows easily from the definition of $Tr$, and the facts that
$(U\ot W)^*=W^*\ot U^*$ with $$coev_{U\ot W}=(id_U\ot c_{U^*,W\ot
W^*})\circ(coev_U\ot coev_W)$$ and $$\widetilde{ev}_{U\ot
W}=(\widetilde{ev}_U\ot \widetilde{ev}_W)\circ(id_U\ot c_{W\ot
W^*,U^*})$$ (see e.g., \cite{bk}).
\end{proof}

\subsection{Traces of permutations.}
Fix a nonnegative integer $m$, and an object $X\in \C$. In the
sequel we shall identify the symmetric group $S_{m-1}$ with the
stabilizer of $1$ in $S_{m}$.

\begin{lemma}\label{l1}
For any $\sigma\in S_{m}$ and $\tau\in S_{m-1}$, $Tr_{X,X}^{X^{\ot
m-1 }}(\sigma)=Tr_{X,X}^{X^{\ot m-1 }}(\tau\sigma\tau^{-1})$.
\end{lemma}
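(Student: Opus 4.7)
The plan is to view the action of $\sigma \in S_m$ on $X^{\otimes m}$ as an endomorphism of $A \otimes U$ with $A = X$ (corresponding to position~1) and $U = X^{\otimes m-1}$ (corresponding to positions~$2,\dots,m$), and then to apply the cyclicity-type property of the partial trace recorded in Lemma~\ref{trace1}. The crucial point is that $\tau \in S_{m-1}$ is the stabilizer of~$1$, so it acts trivially on the first tensor factor; hence under the identification $X^{\otimes m} = X \otimes X^{\otimes m-1}$, the endomorphism of $X^{\otimes m}$ given by $\tau$ has the form $id_X \otimes \tau'$, where $\tau'$ denotes the corresponding endomorphism of $X^{\otimes m-1}$.

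With this identification, the key algebraic observation is that
\[
\tau \sigma \tau^{-1} = (id_X \otimes \tau')\circ \sigma \circ (id_X \otimes \tau'^{\,-1})
\]
as endomorphisms of $X \otimes X^{\otimes m-1}$. I would then set $f := \sigma \circ (id_X \otimes \tau'^{\,-1})$ and $g := \tau' : X^{\otimes m-1} \to X^{\otimes m-1}$ in Lemma~\ref{trace1} (taking $A = B = X$ and $U = W = X^{\otimes m-1}$). On the one hand,
\[
(id_X \otimes g)\circ f = (id_X \otimes \tau')\circ \sigma \circ (id_X \otimes \tau'^{\,-1}) = \tau \sigma \tau^{-1},
\]
while on the other hand,
\[
f \circ (id_X \otimes g) = \sigma \circ (id_X \otimes \tau'^{\,-1})\circ (id_X \otimes \tau') = \sigma.
\]
Lemma~\ref{trace1} then gives exactly $Tr^{X^{\otimes m-1}}_{X,X}(\tau \sigma \tau^{-1}) = Tr^{X^{\otimes m-1}}_{X,X}(\sigma)$, which is the claim.

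There is no real obstacle here: the lemma is a categorical incarnation of the familiar cyclic property of trace, and the hypothesis $\tau \in S_{m-1}$ is used precisely to ensure that the conjugating permutation lives entirely inside the $U$-slot of the partial trace, so that Lemma~\ref{trace1} applies verbatim. The only point requiring a moment of care is the faithful bookkeeping identifying $\tau$, acting on $X^{\otimes m}$ via the symmetry $c$, with $id_X \otimes \tau'$ under the canonical associativity isomorphism $X^{\otimes m} \cong X \otimes X^{\otimes m-1}$; this is immediate from the naturality of $c$ together with the definition of the $S_m$-action.
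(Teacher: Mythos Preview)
Your proposal is correct and follows exactly the approach the paper takes: the paper's proof reads in its entirety ``Follows easily from Lemma~\ref{trace1}'', and your argument is precisely the spelled-out application of that lemma with $f=\sigma\circ(id_X\otimes\tau'^{-1})$ and $g=\tau'$. There is nothing to add.
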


\begin{proof}
Follows easily from Lemma \ref{trace1}.
\end{proof}

\begin{lemma}\label{l3}
We have that $Tr_{X,X}^{X^{\ot m-1 }}((1\cdots m))=id_X$.
\end{lemma}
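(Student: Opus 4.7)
The plan is to induct on $m$. The base case $m=1$ is trivial since $(1) \in S_1$ acts as the identity and $Tr^{\unit}_{X,X}(id_X) = id_X$. For the inductive step ($m \ge 2$), I factor the cycle as $(1, 2, \dots, m) = (1, 2, \dots, m-1) \circ (m-1, m)$ in $S_m$. Identifying $X^{\otimes m} = X^{\otimes m-1} \otimes X$, the first factor acts as $\sigma' \otimes id_X$, where $\sigma' := (1, \dots, m-1) \in S_{m-1}$ acts on $X^{\otimes m-1}$, while the second acts as $id_{X^{\otimes m-2}} \otimes c_{X, X}$, the symmetry swapping the last tensorand of $X^{\otimes m-1}$ with the trailing $X$. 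Applying \eqref{van} with $U = X^{\otimes m-2}$ and $W = X$ then splits
\[
Tr^{X^{\otimes m-1}}_{X,X}((1, \dots, m)) = Tr^{X^{\otimes m-2}}_{X,X}\left(Tr^{X}_{X^{\otimes m-1}, X^{\otimes m-1}}((1, \dots, m))\right).
\]

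For the inner trace, naturality of $Tr$ in $B$ (noted right after \eqref{trace}) pulls the post-composition $\sigma' \otimes id_X$ outside, leaving $\sigma' \circ Tr^X(id_{X^{\otimes m-2}} \otimes c_{X, X})$. Since $Tr^X$ commutes with tensoring by an identity morphism in the non-traced slot (immediate from \eqref{trace} and functoriality of $\otimes$), this equals $\sigma' \circ (id_{X^{\otimes m-2}} \otimes Tr^X_{X,X}(c_{X, X}))$. The whole induction therefore reduces to the single identity $Tr^X_{X,X}(c_{X, X}) = id_X$: granting it, the inner trace is $\sigma' = (1, \dots, m-1)$ and the inductive hypothesis applied to the outer trace returns $id_X$, closing the induction.

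The main obstacle is the base identity $Tr^X_{X,X}(c_{X, X}) = id_X$. I would prove it by unfolding \eqref{trace} and the definition $\widetilde{ev}_X = ev_X \circ c_{X, X^*}$, then using naturality of $c$ (together with $c_{X, \unit} = id_X$ and the hexagon identity $c_{X, X \otimes X^*} = (id_X \otimes c_{X, X^*}) \circ (c_{X, X} \otimes id_{X^*})$) to rewrite $(c_{X, X} \otimes id_{X^*}) \circ (id_X \otimes coev_X)$ as $(id_X \otimes c_{X, X^*}^{-1}) \circ (coev_X \otimes id_X)$. The $c_{X, X^*}$ inside $\widetilde{ev}_X$ then cancels its inverse, reducing the composition to the snake identity $(id_X \otimes ev_X) \circ (coev_X \otimes id_X) = id_X$. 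Threading the symmetry carefully through the (co)evaluation morphisms is the only delicate point; once it is in hand, the inductive argument is purely formal bookkeeping with \eqref{van} and naturality.
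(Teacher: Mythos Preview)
Your proof is correct and follows essentially the same inductive strategy as the paper. The only difference is cosmetic: you factor $(1\cdots m)=(1\cdots m-1)\,(m-1,m)$ and trace out the last tensor slot first, whereas the paper factors $(1\cdots m)=(12)\,(id\ot(1\cdots m-1))$ and traces out the last $m-2$ slots first; in both cases the induction reduces to $Tr^{X}_{X,X}(c_{X,X})=id_X$, which the paper invokes as \eqref{yan} and you re-derive from the hexagon axiom.
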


\begin{proof}
For any $i$ let us denote the cycle $(1\cdots i)$ by $\sigma_i$. We
are going to prove the lemma by induction on $m$ using the relation
$\sigma_{m}=(12)\sigma_{m-1}$. We compute
\begin{eqnarray*}
\lefteqn { Tr_{X,X}^{X^{\ot m-1}}(\sigma_{m})
}\\
& = & Tr_{X,X}^{X}(Tr_{X\ot X,X\ot X}^{X^{\ot m -2}}(\sigma_{m}))\\
& = & Tr_{X,X}^{X}(Tr_{X\ot X,X\ot X}^{X^{\ot m -2}}(((12)\ot id)
\circ(id\ot \sigma_{m-1})))\\
& = & Tr_{X,X}^{X}((12)\circ Tr_{X\ot X,X\ot X}^{X^{\ot m -2}}(id\ot
\sigma_{m-1}))\\
& = & Tr_{X,X}^{X}((12)\circ (id_X\ot Tr_{X,X}^{X^{\ot m -2}}
(\sigma_{m-1})))\\
& = & Tr_{X,X}^{X}((12)\circ (id_X\ot id_X))\\
& = & id_X,
\end{eqnarray*}
where in the first equality we used (\ref{van}), in the third
equality we used the naturality of $Tr$ in $X\ot X$, in the fifth
equality we used the induction assumption, and in the last equality
we used (\ref{yan}).
\end{proof}

\begin{lemma}\label{l2}
Let $\sigma_1\sigma_2\cdots\sigma_{N}\in S_{m}$ be a product of
disjoint cycles, where reading from left to right the numbers
$1,\dots,m$ appear in an increasing order. Then $Tr_{X,X}^{X^{\ot
m-1}}(\sigma_1\sigma_2\cdots\sigma_{N})=d^{N-1}id_X$, where $d$ is
the dimension of $X$.
\end{lemma}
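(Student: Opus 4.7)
My plan is to proceed by induction on the number $N$ of disjoint cycles, the base case $N=1$ being immediate from Lemma \ref{l3} (since then $\sigma_1=(1\cdots m)$ and $d^{N-1}=1$). For the inductive step, I first observe that the increasing-order hypothesis forces each cycle to consist of consecutive integers; in particular $\sigma_1=(1\cdots k_1)$ for some $k_1\ge 1$, while $\sigma':=\sigma_2\cdots\sigma_N$ acts only on the last $m-k_1$ tensor factors. After relabeling $1,\ldots,m-k_1$, this $\sigma'$ is a product of $N-1$ disjoint cycles in $S_{m-k_1}$ satisfying the same increasing-order hypothesis, so the induction hypothesis applies to it.

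The computation is then a direct assembly of the three tools on hand. Viewing $X^{\ot m-1}=X^{\ot k_1-1}\ot X^{\ot m-k_1}$ and writing $\sigma=\sigma_1\ot\sigma'$, Lemma \ref{trace2} yields
$$Tr_{X,X}^{X^{\ot m-1}}(\sigma)=Tr_{X,X}^{X^{\ot k_1-1}}(\sigma_1)\ot Tr_{\unit,\unit}^{X^{\ot m-k_1}}(\sigma').$$
The first tensor factor equals $id_X$ by Lemma \ref{l3}. For the second, I would apply identity (\ref{van}) with $U=X$ and $W=X^{\ot m-k_1-1}$, invoke the induction hypothesis to get $Tr_{X,X}^{X^{\ot m-k_1-1}}(\sigma')=d^{N-2}\,id_X$, and then use $Tr_{\unit,\unit}^X(id_X)=\dim X=d$ to arrive at
$$Tr_{\unit,\unit}^{X^{\ot m-k_1}}(\sigma')=Tr_{\unit,\unit}^X\bigl(Tr_{X,X}^{X^{\ot m-k_1-1}}(\sigma')\bigr)=Tr_{\unit,\unit}^X(d^{N-2}\,id_X)=d^{N-1}.$$
Tensoring the two factors gives $Tr_{X,X}^{X^{\ot m-1}}(\sigma)=d^{N-1}\,id_X$, as required.

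I do not foresee any real obstacle: the argument is essentially a formal manipulation of Lemma \ref{trace2}, Lemma \ref{l3}, and the iterated trace identity (\ref{van}). The only detail requiring minor care is the degenerate case $k_1=1$, in which $\sigma_1$ reduces to the identity on $X$ and Lemma \ref{l3} is replaced by the trivial observation $Tr_{X,X}^{\unit}(id_X)=id_X$, read directly off definition (\ref{trace}); the rest of the argument goes through unchanged.
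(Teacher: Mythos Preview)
Your proof is correct and follows essentially the same approach as the paper's own proof, which simply states that Lemma~\ref{l3} handles the case $N=1$ and that one should use Lemma~\ref{trace2} to proceed by induction on $N$. Your write-up is a faithful and careful expansion of that sketch, including the use of (\ref{van}) to evaluate $Tr_{\unit,\unit}^{X^{\ot m-k_1}}(\sigma')$ and the handling of the degenerate case $k_1=1$.
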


\begin{proof}
Lemma \ref{l3} is the case $N=1$. Now use Lemma \ref{trace2} to
proceed by induction on $N$.
\end{proof}

\begin{proposition}\label{c2}
Let $\sigma\in S_{m}$, and let $N(\sigma)$ denote the number of
disjoint cycles in $\sigma$. Then $Tr_{X,X}^{X^{\ot
m-1}}(\sigma)=d^{N(\sigma)-1}id_X$, where $d:=\dim X$.
\end{proposition}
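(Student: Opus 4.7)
The plan is to reduce the general case to Lemma \ref{l2} by conjugating an arbitrary $\sigma \in S_m$ to a permutation in the standard form appearing in that lemma, using a conjugating element $\tau$ that lies in $S_{m-1}$ (the stabilizer of $1$), so that Lemma \ref{l1} applies. Since conjugation preserves the cycle structure, the number $N(\sigma)$ is automatically preserved.

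First I would write $\sigma$ in disjoint cycle form by placing the cycle containing $1$ first (and writing it so that $1$ is its leftmost entry), and then listing the remaining cycles in order of increasing smallest element, each written with its smallest element first. Reading the resulting expression left to right produces a sequence $b_1 = 1,\, b_2,\, \ldots,\, b_m$ that is a permutation of $\{1,\ldots,m\}$. Define $\tau \in S_m$ by $\tau(b_i) = i$. Because $b_1 = 1$, we have $\tau(1) = 1$, so $\tau \in S_{m-1}$.

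Next I would verify that $\tau \sigma \tau^{-1}$ lies in the standard form of Lemma \ref{l2}: a cycle $(b_i, b_{i+1}, \ldots, b_{i+r-1})$ of $\sigma$ is sent by conjugation to $(i, i+1, \ldots, i+r-1)$, so reading the cycles of $\tau\sigma\tau^{-1}$ from left to right yields $1, 2, \ldots, m$ in strictly increasing order. Since conjugation preserves the number of disjoint cycles, $N(\tau\sigma\tau^{-1}) = N(\sigma)$.

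Finally, combining Lemma \ref{l1} and Lemma \ref{l2} gives
\[
Tr_{X,X}^{X^{\ot m-1}}(\sigma) \;=\; Tr_{X,X}^{X^{\ot m-1}}(\tau\sigma\tau^{-1}) \;=\; d^{N(\sigma)-1}\, id_X,
\]
which is exactly the claim. The only real subtlety is arranging the conjugator to fix $1$; the construction above makes this automatic because placing the cycle through $1$ first in the standard listing forces $b_1 = 1$, and hence $\tau \in S_{m-1}$, which is precisely what Lemma \ref{l1} requires.
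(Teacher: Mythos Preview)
Your proof is correct and follows essentially the same approach as the paper: conjugate $\sigma$ by some $\tau\in S_{m-1}$ into the standard form of Lemma \ref{l2}, then apply Lemma \ref{l1} and Lemma \ref{l2}. The paper simply asserts that such a $\tau$ exists (``it is clear that\ldots''), whereas you have helpfully spelled out its explicit construction.
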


\begin{proof}
It is clear that for any $\sigma\in S_{m}$ there exists $\tau\in
S_{m-1}$ such that $\tau\sigma\tau^{-1}$ decomposes into a product
of disjoint cycles $\sigma_1\sigma_2\cdots\sigma_{N(\sigma)}$, where
reading from left to right the numbers $1,\dots,m$ are in an
increasing order. Now, by Lemma \ref{l1}, $Tr_{X,X}^{X^{\ot m-1
}}(\sigma)=Tr_{X,X}^{X^{\ot m-1 }}(\tau\sigma\tau^{-1})$, and hence
the result follows from Lemma \ref{l2}.
\end{proof}

\subsection{The morphism $\theta_{X,m,\alpha,\beta}$.}
Given a partition $\alpha$ of a nonnegative integer $m-1$, let
$\alpha+1$ denote the set of partitions of $m$ whose Young diagram
is obtained by adding a single box to the Young diagram of $\alpha$.

Fix an object $X\in \C$ of dimension $d:=\dim X$, and partitions
$\alpha$ of $m-1$ and $\beta\in \alpha+1$. We define the morphism
$$\theta_{\alpha,\beta}=
\theta_{X,m,\alpha,\beta}:X\to \mathbb{S}_{\beta}X\ot
\mathbb{S}_{\alpha}X^*$$ as the following composition:
\begin{equation}\label{theta}
\theta_{\alpha,\beta}:X  \xrightarrow{id_X\ot
coev_{\mathbb{S}_{\alpha}X}} X\ot \mathbb{S}_{\alpha}X\ot
\mathbb{S}_{\alpha}X^* \xrightarrow{c_{\beta}\ot c_{\alpha}}
\mathbb{S}_{\beta}X\ot \mathbb{S}_{\alpha}X^*.
\end{equation}

Consider the morphism
\begin{equation*}\label{spinj}
P_{\alpha,\beta}=P_{X,m,\alpha,\beta}:X\to X,
\end{equation*}
given as the composition
\begin{equation}\label{spinj}
P_{\alpha,\beta}:X
\xrightarrow{\theta_{\alpha,\beta}}\mathbb{S}_{\beta}X\ot
\mathbb{S}_{\alpha}X^* \hookrightarrow X\ot X^{\ot m-1 }\ot X^{*\ot
m-1 } \xrightarrow{id_X\ot \widetilde{ev}_{X^{\ot m-1 }}} X.
\end{equation}
In what follows we shall see that the morphism $P_{\alpha,\beta}$ is
a scalar multiple of the identity morphism $id_X$ by some polynomial
$p_{\alpha,\beta}(d)$.

If we identify $S_{m-1}$ with the stabilizer of $1$ in $S_{m}$, then
clearly
$$P_{\alpha,\beta}=Tr_{X,X}^{X^{\ot m-1 }}((id_X\ot c_{\alpha})
\circ c_{\beta}\circ (id_X\ot c_{\alpha})):X\to X,$$ and hence, by
Lemma \ref{trace1},
$$P_{\alpha,\beta}=Tr_{X,X}^{X^{\ot m-1}}((id_X\ot c_{\alpha})\circ
c_{\beta}):X\to X.$$

As an immediate consequence of Proposition \ref{c2}, we get the
following.
\begin{corollary}
Write $(id_X\ot c_{\alpha})\circ c_{\beta}\in k[S_{m}]$ as a
$k-$linear combination of group elements: $(id_X\ot c_{\alpha})\circ
c_{\beta}=\sum_{\sigma\in S_{m}}f_{\alpha,\beta}(\sigma)\sigma$, and
set $$p_{\alpha,\beta}(d):=\sum_{\sigma\in
S_{m}}f_{\alpha,\beta}(\sigma)d^{N(\sigma)-1}.$$ Then
$P_{\alpha,\beta}=p_{\alpha,\beta}(d)id_X$. In particular, if
$p_{\alpha,\beta}(d)\ne 0$ in $k$ then $\theta_{\alpha,\beta}$ is a
split injection. \qed
\end{corollary}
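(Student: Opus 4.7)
The plan is to observe that this corollary is essentially bookkeeping on top of Proposition \ref{c2}. I have already reduced $P_{\alpha,\beta}$, via Lemma \ref{trace1}, to the trace
$$P_{\alpha,\beta}=Tr_{X,X}^{X^{\ot m-1}}\bigl((id_X\ot c_{\alpha})\circ c_{\beta}\bigr),$$
where $(id_X\ot c_\alpha)\circ c_\beta$ is viewed as an element of $k[S_m]$ acting on $X^{\ot m}=X\ot X^{\ot m-1}$ via the symmetric braiding $c$. Since the assignment $\sigma\mapsto Tr_{X,X}^{X^{\ot m-1}}(\sigma)$ is the restriction of a $k$-linear map from $\Hom(X\ot X^{\ot m-1},X\ot X^{\ot m-1})$ to $\End(X)$, expanding by linearity gives
$$P_{\alpha,\beta}=\sum_{\sigma\in S_m}f_{\alpha,\beta}(\sigma)\,Tr_{X,X}^{X^{\ot m-1}}(\sigma).$$

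Now I apply Proposition \ref{c2} termwise: for each $\sigma\in S_m$, $Tr_{X,X}^{X^{\ot m-1}}(\sigma)=d^{N(\sigma)-1}id_X$. Substituting, the sum collapses to
$$P_{\alpha,\beta}=\Bigl(\sum_{\sigma\in S_m}f_{\alpha,\beta}(\sigma)\,d^{N(\sigma)-1}\Bigr)id_X=p_{\alpha,\beta}(d)\,id_X,$$
which is the first assertion.

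For the second assertion, recall from (\ref{spinj}) that $P_{\alpha,\beta}$ factors as
$$X\xrightarrow{\theta_{\alpha,\beta}}\mathbb{S}_\beta X\ot\mathbb{S}_\alpha X^*\xrightarrow{\iota}X\ot X^{\ot m-1}\ot X^{*\ot m-1}\xrightarrow{id_X\ot\widetilde{ev}_{X^{\ot m-1}}}X,$$
so that $P_{\alpha,\beta}=\psi\circ\theta_{\alpha,\beta}$ where $\psi:=(id_X\ot\widetilde{ev}_{X^{\ot m-1}})\circ\iota$. If $p_{\alpha,\beta}(d)\ne 0$ in $k$, then $P_{\alpha,\beta}$ is an isomorphism, and therefore $p_{\alpha,\beta}(d)^{-1}\psi$ is a left inverse to $\theta_{\alpha,\beta}$, showing it is a split injection. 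The only point that required any thought was confirming that Proposition \ref{c2} can be fed through the linear combination cleanly, which is immediate from the linearity of $Tr$ in its input morphism; there is no genuine obstacle.
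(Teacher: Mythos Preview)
Your proof is correct and is exactly the intended argument: the paper states this corollary as an immediate consequence of Proposition~\ref{c2} (note the \qed in the statement), and you have simply written out the linearity step and the factorization $P_{\alpha,\beta}=\psi\circ\theta_{\alpha,\beta}$ that make it so. There is nothing to add.
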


Let $\chi_{\beta}$ be the character of $V_{\beta}$, and let
$$e_{\beta}:=\frac{\dim V_{\beta}}{m!}\sum_{\sigma\in
S_m}\chi_{\beta}(\sigma) \sigma$$ be the primitive central
idempotent in $k[S_m]$ associated with $V_{\beta}$. Recall that
$e_{\beta}$ is equal to the sum of all the ($\dim V_{\beta}$) Young
symmetrizers $c_{\beta}$ associated with $V_{\beta}$.

In the following theorem we compute the polynomial
$p_{\alpha,\beta}(d)$ explicitly, in terms of $\chi_{\beta}$.

\begin{theorem}\label{c1}
We have that
$$Tr_{X,X}^{X^{\ot m-1 }}\left((id_X\ot
e_{\alpha})\circ e_{\beta}\right)=\left(\frac{\dim
V_{\alpha}}{m!}\sum_{\sigma\in S_{m}}\chi_{\beta}(\sigma)
d^{N(\sigma)-1}\right)id_X,$$ and hence
$$p_{\alpha,\beta}(d)=\frac{1}{m!\dim
V_{\beta}}\sum_{\sigma\in S_{m}}\chi_{\beta}(\sigma)
d^{N(\sigma)-1}.$$
\end{theorem}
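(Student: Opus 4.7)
The plan is to reduce, via Proposition \ref{c2} and $k$-linearity of the trace, to a polynomial identity in $d$ with rational coefficients, and then to verify it pointwise for $d\in\BN$ using classical Schur-Weyl duality on $k^d$. Concretely, writing $e_\alpha\cdot e_\beta=\sum_{\sigma\in S_m}c_\sigma\,\sigma$ in $k[S_m]$ (with $c_\sigma\in\Q$ depending only on $\alpha,\beta$), $k$-linearity together with Proposition \ref{c2} immediately give
$$Tr^{X^{\otimes(m-1)}}_{X,X}\bigl((id_X\otimes e_\alpha)\circ e_\beta\bigr)=\Bigl(\sum_\sigma c_\sigma\,d^{N(\sigma)-1}\Bigr)id_X.$$
Since $N(\sigma)\ge 1$ for every $\sigma\in S_m$, multiplying through by $d$ transforms the required identity into the equivalent polynomial statement
$$\sum_\sigma c_\sigma\,d^{N(\sigma)}=\frac{\dim V_\alpha}{m!}\sum_\sigma\chi_\beta(\sigma)\,d^{N(\sigma)} \qquad (\ast)$$
in $\Q[d]$.

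To verify $(\ast)$ at an arbitrary positive integer $d$, I specialize to $X=k^d$ in the category of ordinary $k$-vector spaces. There the character of the $S_m$-action on $X^{\otimes m}$ by tensor permutation is $\psi_d(\sigma)=d^{N(\sigma)}$, so the LHS of $(\ast)$ equals $\trace_{X^{\otimes m}}(e_\alpha\cdot e_\beta)$. I compute this trace using the Schur-Weyl decomposition $X^{\otimes m}=\bigoplus_{\gamma\vdash m}V_\gamma\otimes\mathbb{S}_\gamma X$ as an $(S_m,GL(X))$-bimodule: the central idempotent $e_\beta$ projects onto the summand $V_\beta\otimes\mathbb{S}_\beta X$, and on that summand $e_\alpha\in k[S_{m-1}]$ acts on the $V_\beta$-factor via the multiplicity-free branching $V_\beta|_{S_{m-1}}=\bigoplus_{\alpha'\in\beta-1}V_{\alpha'}$, projecting onto the copy of $V_\alpha$ when $\alpha\in\beta-1$ and to $0$ otherwise. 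Therefore
$$\trace_{X^{\otimes m}}(e_\alpha\cdot e_\beta)=\dim V_\alpha\cdot\dim\mathbb{S}_\beta X=\frac{\dim V_\alpha}{m!}\sum_\sigma\chi_\beta(\sigma)\,d^{N(\sigma)},$$
the final equality being the classical formula $\dim\mathbb{S}_\beta(k^d)=\la\chi_\beta,\psi_d\ra_{S_m}$. This matches the RHS of $(\ast)$, so $(\ast)$ holds for every $d\in\BN$, and hence identically in $\Q[d]$. Since $m!$ is invertible in $k$ by hypothesis, $(\ast)$ transfers to $k[d]$, and specializing at $d=\dim X\in k$ yields the displayed formula of the theorem.

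The ``hence'' clause for $p_{\alpha,\beta}(d)$ follows by expanding $e_\alpha$ and $e_\beta$ as sums of their $\dim V_\alpha$ and $\dim V_\beta$ Young symmetrizers respectively, applying linearity of $Tr$, and observing that the resulting scalar is independent of the chosen pair of symmetrizers; one then divides by $\dim V_\alpha\cdot\dim V_\beta$. The main obstacle is the conceptual bridge from the abstract categorical trace in $\C$ to the concrete vector-space trace; what makes it rigorous is that the coefficients $c_\sigma\in\Q$ are \emph{universal} (depending only on $\alpha,\beta$, not on the category $\C$ or the object $X$), so $(\ast)$ is a purely representation-theoretic identity whose verification over $\Q$ suffices.
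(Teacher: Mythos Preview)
Your proof of the first displayed formula is correct, and it follows a genuinely different path from the paper's. The paper expands $(id_X\ot e_\alpha)\circ e_\beta$ directly as a double sum over $\tau\in S_{m-1}$ and $\sigma\in S_m$, applies Proposition~\ref{c2} term by term, then recognizes the central element $z(d)=\sum_{\sigma}d^{N(\sigma)-1}\sigma\in k[S_m]$; centrality lets it pull $\chi_\beta(z(d))/\dim V_\beta$ out, and the remaining inner sum $\frac{1}{(m-1)!}\sum_{\tau}\chi_\alpha(\tau)\chi_\beta(\tau^{-1})$ collapses to the branching multiplicity $[V_\beta|_{S_{m-1}}:V_\alpha]=1$. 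Your route instead observes that Proposition~\ref{c2} reduces the whole statement to a polynomial identity with rational coefficients, and then verifies that identity at every positive integer $d$ by interpreting both sides as an ordinary trace on $(k^d)^{\otimes m}$ via Schur--Weyl duality; the branching rule enters in the same place, as the fact that $e_\alpha$ cuts out a single copy of $V_\alpha$ inside $V_\beta$. The paper's argument is entirely internal to $\C$ and slightly shorter; your argument makes the universality of the coefficients explicit and ties the result to the classical formula $\dim\mathbb{S}_\beta(k^d)=\langle\chi_\beta,\psi_d\rangle$, which is conceptually satisfying and foreshadows equation~(\ref{new}) immediately.

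On the ``hence'' clause: your justification (expand $e_\alpha,e_\beta$ into Young symmetrizers and use that each summand gives the same scalar) matches the paper's implicit reasoning, and both are brief. The independence assertion is not entirely trivial---it amounts to $\text{tr}_{V_\beta}(c_\alpha c_\beta)=1/\dim V_\beta$ for any choice of primitive idempotents---but since the paper itself states the ``hence'' without further comment, your treatment is at the same level of rigor.
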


\begin{proof}
Clearly, $$(id_X\ot e_{\alpha})\circ e_{\beta}=\frac{\dim
V_{\alpha}\dim V_{\beta}}{(m-1)!m!} \sum_{\sigma\in
S_{m}}\left(\sum_{\tau\in S_{m-1}}\chi_{\alpha}(\tau)
\chi_{\beta}(\tau^{-1}\sigma)\right)\sigma.$$ Therefore, by
Proposition \ref{c2},
\begin{eqnarray*}
\lefteqn{Tr_{X,X}^{X^{\ot m-1}}((id_X\ot e_{\alpha})\circ e_{\beta})}\\
= & & \left(\frac{\dim V_{\alpha}\dim V_{\beta}}{(m-1)!m!}
\sum_{\sigma\in S_{m}}\left(\sum_{\tau\in
S_{m-1}}\chi_{\alpha}(\tau)
\chi_{\beta}(\tau^{-1}\sigma)\right)d^{N(\sigma)-1}\right)id_X\\
= & & \left(\frac{\dim V_{\alpha}\dim V_{\beta}}{(m-1)!m!}
\left(\sum_{\tau\in S_{m-1}}\chi_{\alpha}(\tau) \chi_{\beta}\left
(\tau^{-1}\left(\sum_{\sigma\in S_{m}}
d^{N(\sigma)-1}\sigma\right)\right)\right)\right)id_X.
\end{eqnarray*}

Set $z(d):=\sum_{\sigma\in S_{m}}d^{N(\sigma)-1}\sigma$. Clearly,
$z(d)$ is a central element in $k[S_{m}]$, hence it acts by the
scalar $\chi_{\beta}(z(d))/\dim V_{\beta}$ on $V_{\beta}$. In
particular, for any $\tau\in S_{m}$, $\chi_{\beta}(\tau^{-1}z(d))=
\chi_{\beta}(\tau^{-1})\chi_{\beta}(z(d))/\dim V_{\beta}$. We
therefore have
\begin{eqnarray*}
\lefteqn{Tr_{X,X}^{X^{\ot m-1}}((id_X\ot e_{\alpha})\circ e_{\beta})}\\
= & & \left(\frac{\dim V_{\alpha}}{(m-1)!m!} \left(\sum_{\tau\in
S_{m-1}}\chi_{\alpha}(\tau)
\chi_{\beta}(\tau^{-1})\right)\chi_{\beta}(z(d))\right)id_X.
\end{eqnarray*}

Finally, recall that the multiplicity
$[Res^{S_{m}}_{S_{m-1}}\chi_{\beta}:\chi_{\alpha}]$ of $V_{\alpha}$
in the restriction of $V_{\beta}$ from $S_{m}$ to $S_{m-1}$ is equal
to $1$ (see e.g. \cite{fh}), i.e.,
$$\frac{1}{(m-1)!}\sum_{\tau\in
S_{m-1}}\chi_{\alpha}(\tau) \chi_{\beta}(\tau^{-1})=
[Res^{S_{m}}_{S_{m-1}}\chi_{\beta}:\chi_{\alpha}]=1.$$ We thus
conclude that
$$Tr_{X,X}^{X^{\ot m-1 }}((id_X\ot e_{\alpha})\circ
e_{\beta}) = \left(\frac{\dim V_{\alpha}}{m!}\sum_{\sigma\in
S_{m}}\chi_{\beta}(\sigma) d^{N(\sigma)-1}\right)id_X,$$ as claimed.
\end{proof}

In fact, the polynomial $p_{\alpha,\beta}(d)$ is closely related to
a well known polynomial associated with the partition $\beta$.
Namely, let ${\rm cp}_{\beta}(d):=\prod_{(i,j)\in \beta}(d+j-i)$ be
the content polynomial of $\beta$, and recall that the polynomial
(in $d$) $\frac{1}{\dim V_{\beta}}\sum_{\sigma\in
S_{m}}\chi_{\beta}(\sigma) d^{N(\sigma)}$ equals ${\rm
cp}_{\beta}(d)$ (see e.g. \cite{md}). Hence, by Theorem \ref{c1},
\begin{equation}\label{new}
p_{\alpha,\beta}(d)d=\frac{1}{m!}{\rm cp}_{\beta}(d).
\end{equation}

\begin{corollary}\label{cor1}
Let $\alpha$, $\beta$, $X$ and $d$ be as above, and let $p(\beta)$,
$q(\beta)$ be the number of rows and columns in the diagram of
$\beta$, respectively.

1) If $d\ne 1-q(\beta),\dots,p(\beta)-1$ in $k$ then the morphism
$\theta_{\alpha,\beta}$ is a split injection.

2) Suppose $\beta$ is a hook. If $d\ne
1-q(\beta),\dots,-1,1,\dots,p(\beta)-1$ in $k$ then the morphism
$\theta_{\alpha,\beta}$ is a split injection.
\end{corollary}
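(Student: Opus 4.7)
The plan is to combine equation (\ref{new}) with the corollary stated immediately before Theorem \ref{c1}. That earlier corollary reduces both parts of the claim to showing that $p_{\alpha,\beta}(d)\ne 0$ in $k$ under the stated hypotheses on $d$. Equation (\ref{new}) gives
\[
p_{\alpha,\beta}(d)\cdot d=\frac{1}{m!}\,{\rm cp}_{\beta}(d)=\frac{1}{m!}\prod_{(i,j)\in \beta}(d+j-i).
\]
Since $\beta$ is a nonempty partition we have $(1,1)\in\beta$, so the product on the right contains the factor $(d+1-1)=d$. Cancelling this $d$ with the $d$ on the left yields the explicit factorization
\[
p_{\alpha,\beta}(d)=\frac{1}{m!}\prod_{(i,j)\in\beta\setminus\{(1,1)\}}(d+j-i),
\]
so the roots of $p_{\alpha,\beta}(d)$ are precisely the values $d=i-j$ as $(i,j)$ ranges over the boxes of $\beta$ other than $(1,1)$.

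For part (1), I would simply note that for any $(i,j)\in\beta$ one has $1\le i\le p(\beta)$ and $1\le j\le q(\beta)$, hence $i-j$ lies in the integral interval $\{1-q(\beta),\dots,p(\beta)-1\}$. Therefore, if $d$ avoids this interval in $k$, every factor in the displayed product is nonzero, so $p_{\alpha,\beta}(d)\ne 0$ and the previous corollary applies.

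For part (2), I would additionally rule out the value $d=0$. A root $d=0$ of $p_{\alpha,\beta}$ requires a box $(i,i)\in\beta$ with $i\ge 2$, i.e.\ $(2,2)\in\beta$. But when $\beta$ is a hook we have $(2,2)\notin\beta$, so $0$ is not a root; the remaining boxes either lie in the first row (giving $d=1-j$ with $2\le j\le q(\beta)$) or the first column (giving $d=i-1$ with $2\le i\le p(\beta)$), so the roots lie in $\{1-q(\beta),\dots,-1\}\cup\{1,\dots,p(\beta)-1\}$. Avoiding these values in $k$ again gives $p_{\alpha,\beta}(d)\ne 0$, and the conclusion follows from the earlier corollary.

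There is essentially no obstacle here: once equation (\ref{new}) and the content polynomial identity ${\rm cp}_{\beta}(d)=\prod_{(i,j)\in\beta}(d+j-i)$ are in hand, the argument is a direct inspection of which integers in $\{1-q(\beta),\dots,p(\beta)-1\}$ actually occur as contents $i-j$ of boxes of $\beta$ different from $(1,1)$, and the hook case is singled out exactly because the absence of $(2,2)$ is what prevents $0$ from being a content.
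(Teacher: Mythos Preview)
Your argument is correct. Part (1) is essentially the paper's proof: both use that $d\ne 0$ lies outside the range of contents, so (\ref{new}) gives $p_{\alpha,\beta}(d)\ne 0$.

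For part (2) your route differs from the paper's. You observe that (\ref{new}) is a polynomial identity in $d$, cancel the factor $d$ coming from the box $(1,1)$, and read off directly that $0$ is a root of $p_{\alpha,\beta}$ only when $(2,2)\in\beta$. The paper instead evaluates $p_{\alpha,\beta}(0)$ from the character formula of Theorem~\ref{c1}: only $m$-cycles contribute (since $N(\sigma)=1$ forces $\sigma$ to be an $m$-cycle), and then one invokes the Murnaghan--Nakayama fact that $\chi_\beta$ is $\pm 1$ on $m$-cycles for hooks and $0$ otherwise. Your approach is more self-contained---it avoids quoting character values on long cycles---while the paper's computation makes the representation-theoretic reason for the hook condition explicit. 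Both lead to the same conclusion.
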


\begin{proof}
1) Since $d\ne 0$ in $k$, the result follows from (\ref{new}) and
Theorem \ref{c1}.

2) By Theorem \ref{c1}, $p_{\alpha,\beta}(0)=\frac{1}{m!\dim
V_{\beta}}\sum_{\sigma}\chi_{\beta}(\sigma)$, where the sum is taken
over all the $m-$cycles $\sigma$ in $S_{m}$. But it is well known
(see e.g. \cite{md}) that $\chi_{\beta}$ vanishes on a $m-$cycle
when $\beta$ is not a hook, and that
$\chi_{(m-s,1^s)}(\sigma)=(-1)^s$ for any $0\le s\le m$ and
$m-$cycle $\sigma$. Therefore $\theta_{\alpha,\beta}$ is a split
injection when $d=0$ as well.

We are done.
\end{proof}

\begin{example}
For the partition $\alpha=(1^{m-1})$,
$\mathbb{S}_{\alpha}X=\wedge^{m-1} X$ is the $(m-1)th$ exterior
power of $X$. Hence, by Corollary \ref{cor1}, if
${{d-1}\choose{m-1}}\ne 0$ in $k$, then the corresponding morphism
$\theta_{(1^{m-1}),(1^{m})}$ is a split injection. This is a
generalization of Lemma 5.1.12 in \cite{s1} in the group case.
\end{example}

\subsection{Extensions in $\C$.}
Let $U,V,W\in \C$ and let $f\in \Hom(V,W)$, $g\in \Hom(W,U)$. We
shall denote by $f_*$ and $g^*$ the $k-$linear maps
$$f_*:\Ext(U,V)\to \Ext(U,W)\,\,\,and\,\,\, g^*:\Ext(U,V)\to
\Ext(W,V)$$ induced by $f$ and $g$, respectively. Namely, given an
extension $$E:\,\,0\to V\to X\to U\to 0,$$ the extensions
$$f_*(E):\,\,0\to W\to Y\to U\to 0\,\,\,and\,\,\,g^*(E):\,\,0\to V\to Z\to
W\to 0$$ are obtained using the pushout
\begin{equation*}
\label{equivariantX} \xymatrix{V\ar[d]^{f}\ar[rr]&& X\ar@{-->}[d] &&\\
W\ar@{-->}[rr]&& Y&&}
\end{equation*}
and the pullback
\begin{equation*}
\label{equivariantX} \xymatrix{X\ar[rr] && U\\
Z\ar@{-->}[u]\ar@{-->}[rr]&& W \ar[u]^{g} && ,}
\end{equation*}
respectively.

We shall need the following two lemmas.

\begin{lemma}\label{ext}
For any objects $A,B,X\in \C$, the $k-$linear spaces $\Ext(B,A\ot
X)$ and $\Ext(B\ot X^*,A)$ are canonically isomorphic.
\end{lemma}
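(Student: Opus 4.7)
The plan is to extend the standard tensor-hom adjunction of duality, $\Hom(B\ot X^*, A)\cong \Hom(B, A\ot X)$, from $\Hom$ to $\Ext$ using pushouts and pullbacks of extensions. The key enabling fact is that in a rigid tensor category tensoring with any fixed object is exact (it admits both a left and a right adjoint, given by tensoring with the appropriate dual), hence preserves short exact sequences.

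I would construct a map $\Phi:\Ext(B, A\ot X)\to \Ext(B\ot X^*, A)$ as follows. Given an extension
\[E:\ 0\to A\ot X\to Y\to B\to 0,\]
tensor on the right with $X^*$ to obtain the exact sequence
\[0\to A\ot X\ot X^*\to Y\ot X^*\to B\ot X^*\to 0,\]
and then pushout along $id_A\ot \widetilde{ev}_X:A\ot X\ot X^*\to A$; set $\Phi(E)$ to be the resulting extension of $B\ot X^*$ by $A$. The inverse $\Psi:\Ext(B\ot X^*, A)\to\Ext(B, A\ot X)$ is defined symmetrically: given an extension of $B\ot X^*$ by $A$, tensor on the right with $X$ and pull back along the coevaluation-type morphism $B\to B\ot X^*\ot X$ built from $coev_X$ together with the symmetry $c_{X,X^*}$. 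Both $\Phi$ and $\Psi$ are $k$-linear by the functoriality of pushouts and pullbacks together with the Baer-sum description of addition in $\Ext$.

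The main step is to verify that $\Psi$ and $\Phi$ are mutually inverse. I would carry out a diagram chase showing that the composite $\Psi\circ\Phi$ applied to $E$ produces an extension equivalent to $E$; the essential point is that the two-step pushout-pullback operation is induced on the $X$-factor of the middle term by the rigidity zig-zag composite, which equals $id_X$ by (\ref{yan}). The converse equality $\Phi\circ\Psi=id$ follows from the analogous zig-zag for $X^*$. The main obstacle is organizing this diagram chase carefully, in particular correctly threading the symmetry isomorphisms $c$ needed to align $\widetilde{ev}_X$ with the pushout target and $coev_X$ with the pullback source; once the diagrams are properly arranged, each individual verification is a routine application of universal properties.
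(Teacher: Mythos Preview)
Your proposal is correct and follows essentially the same approach as the paper: both construct the map $\Ext(B,A\ot X)\to\Ext(B\ot X^*,A)$ by tensoring on the right with $X^*$ and pushing out along $id_A\ot\widetilde{ev}_X=id_A\ot(ev_X\circ c_{X,X^*})$, and build the inverse by tensoring with $X$ and pulling back along $id_B\ot(c_{X,X^*}\circ coev_X)$. Your write-up is slightly more explicit than the paper's about why the two maps are mutually inverse (invoking the zig-zag identity), but the argument is the same.
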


\begin{proof}
One associates to an element
\begin{equation*}
0\xrightarrow{} A\ot X\xrightarrow{} W\xrightarrow{} B\xrightarrow{}
0
\end{equation*}
in $\Ext(B,A\ot X)$ an element in $\Ext(B\ot X^*,A)$ in the
following way: since the functor $-\ot X^*:\C\to \C$ is exact,
tensoring our exact sequence with $X^*$ on the right yields the
extension
\begin{equation*}\label{equivariantX}
E:\,\,\,0\xrightarrow{} A\ot X\ot X^*\xrightarrow{} W\ot
X^*\xrightarrow{} B\ot X^*\xrightarrow{} 0.
\end{equation*}
The corresponding extension
\begin{equation*}\label{(V)}
0\xrightarrow{} A\xrightarrow{} \tilde W\xrightarrow{} B\ot
X^*\xrightarrow{} 0
\end{equation*}
in $\Ext(B\ot X^*,A)$ is given by $(id_A\ot (ev_X\circ
c_{X,X^*}))_*(E)$. This assignment defines a $k-$linear map
$\Ext(B,A\ot X)\to \Ext(B\ot X^*,A)$, and it is straightforward to
verify that its inverse map is constructed similarly, using the
exact functor $-\ot X$ and the map $(id_B\ot (c_{X,X^*}\circ
coev_X))^*$.
\end{proof}

\begin{lemma}\label{cor}
Let $\alpha$ be a partition of a nonnegative integer $m-1$, let
$\beta\in \alpha+1$ and let $A,B,X\in \C$. Suppose that
$\theta_{\alpha,\beta}=\theta_{X,m,\alpha,\beta}$ is a split
injection. Then the $k-$linear map
$$(id_{B\ot X}\ot coev_{\mathbb{S}_{\alpha}X})_*:\Ext(A,B\ot X)\to
\Ext(A,B\ot X\ot \mathbb{S}_{\alpha}X\ot \mathbb{S}_{\alpha}X^*)$$
is injective.
\end{lemma}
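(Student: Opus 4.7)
The plan is to reduce the statement to the fact that \emph{split injections induce injections on $\Ext^1$}, together with the observation that $\theta_{\alpha,\beta}$ was \emph{constructed} by applying $c_\beta\otimes c_\alpha$ to $id_X\otimes coev_{\mathbb{S}_\alpha X}$. Concretely, the definition (\ref{theta}) gives the factorization
\[
\theta_{\alpha,\beta}=(c_\beta\otimes c_\alpha)\circ(id_X\otimes coev_{\mathbb{S}_\alpha X}).
\]
So if $r:\mathbb{S}_\beta X\otimes\mathbb{S}_\alpha X^*\to X$ is a retraction of $\theta_{\alpha,\beta}$, then
\[
\bigl(r\circ(c_\beta\otimes c_\alpha)\bigr)\circ\bigl(id_X\otimes coev_{\mathbb{S}_\alpha X}\bigr)=id_X,
\]
which exhibits $id_X\otimes coev_{\mathbb{S}_\alpha X}:X\to X\otimes\mathbb{S}_\alpha X\otimes\mathbb{S}_\alpha X^*$ as a split injection.

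Next, I would tensor this split injection on the left with $id_B$. Since $B\otimes(-)$ is an exact functor and preserves retractions (the retraction is simply $id_B\otimes r\circ(c_\beta\otimes c_\alpha)$), the morphism
\[
id_{B\otimes X}\otimes coev_{\mathbb{S}_\alpha X}\colon B\otimes X\longrightarrow B\otimes X\otimes \mathbb{S}_\alpha X\otimes\mathbb{S}_\alpha X^*
\]
is again a split injection.

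Finally, I would invoke the standard abelian-category fact: if $g\colon V\to W$ is a split monomorphism with retraction $s\colon W\to V$, then the induced map $g_*\colon\Ext(A,V)\to\Ext(A,W)$ has left inverse $s_*$, since $s_*\circ g_*=(s\circ g)_*=(id_V)_*=id_{\Ext(A,V)}$; in particular $g_*$ is injective. Applying this to $g=id_{B\otimes X}\otimes coev_{\mathbb{S}_\alpha X}$ yields the lemma.

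I do not foresee a real obstacle: the whole argument is bookkeeping around the factorization of $\theta_{\alpha,\beta}$. The only point requiring any care is the first step — making sure that composing a retraction of $\theta_{\alpha,\beta}$ with $c_\beta\otimes c_\alpha$ does give a retraction of $id_X\otimes coev_{\mathbb{S}_\alpha X}$ (it does, precisely because $\theta_{\alpha,\beta}$ was \emph{defined} as the composition in that order). Everything else is the exactness of $B\otimes(-)$ and the functoriality of $\Ext(A,-)$.
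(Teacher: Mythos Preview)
Your proof is correct and follows essentially the same approach as the paper. The paper tensors with $id_B$ first and then uses the factorization $(id_B\otimes\theta_{\alpha,\beta})_*=(id_B\otimes c_\beta\otimes id)_*\circ(id_{B\otimes X}\otimes coev_{\mathbb{S}_\alpha X})_*$ on the level of $\Ext$-maps to conclude that the first factor is injective; you instead first observe that the factorization of $\theta_{\alpha,\beta}$ already makes $id_X\otimes coev_{\mathbb{S}_\alpha X}$ split, and then tensor and pass to $\Ext$. These are two phrasings of the same idea.
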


\begin{proof}
Indeed, since $\theta_{\alpha,\beta}$ is a split injection, we have
that
$$(id_B\ot \theta_{\alpha,\beta})_*:\Ext(A,B\ot X)\to \Ext(A,B\ot
\mathbb{S}_{\beta}X\ot \mathbb{S}_{\alpha}X^*)$$ is injective. But,
$$(id_B\ot \theta_{\alpha,\beta})_*=
(id_B\ot c_{\beta}\ot id_{\mathbb{S}_{\alpha}X^*})_*\circ (id_{B\ot
X}\ot coev_{\mathbb{S}_{\alpha}X})_*.$$ We are done.
\end{proof}

\subsection{The filtration on $\mathbb{S}_{\lambda}V$ defined by a
sub-object of $V$.} Fix a sub-object $A$ of $V$ for the rest of the
section, and consider the short exact sequence
\begin{equation}\label{exseq}
(V):\,\,\,\,0\xrightarrow{} A\xrightarrow{} V\xrightarrow{}
B\xrightarrow{} 0;
\end{equation}
it is an element in the $k-$linear space $\Ext(B,A)$. Then $(V)$
defines a filtration on $\mathbb{S}_{\lambda}V$ in the following
way. For each $0\le i\le n$ set
$$T_{i}:=\sum_{S\subseteq \{1,...,n\},\,
|S|=i}V_{S(1)}\otimes\cdots\otimes V_{S(n)},$$ where $V_{S(j)}=V$ if
$j\notin S$ and $V_{S(j)}=A$ if $j\in S$. Clearly, the $T_{i}$
define a $S_n-$equivariant filtration $T_*$ on $V^{\otimes n}$:
$$V^{\otimes n}=T_0\supseteq T_1\supseteq\cdots\supseteq T_n\supseteq
T_{n+1}=0,$$ whose composition factors are
$$T_{i}/T_{i+1}\cong\bigoplus_{S\subseteq \{1,...,n\},\,
|S|=i}V_{S,1}\otimes\cdots\otimes V_{S,n},\,\,0\le i\le n,$$ where
$V_{S,j}=B$ if $j\notin S$ and $V_{S,j}=A$ if $j\in S$.

The filtration $T_*$ induces a filtration $F_*$ on
$\mathbb{S}_{\lambda}V$:
$$\mathbb{S}_{\lambda}V=F_0\supseteq F_1\supseteq\cdots\supseteq
F_n\supseteq F_{n+1}=0,$$ where $F_{i}:=c_{\lambda}(T_{i})$ is the
image of $T_{i}$ under the Schur functor $c_{\lambda}$. Let
\begin{equation}\label{vi}
V_{i}:=F_{i}/F_{i+1},\,\,\,0\le i\le n,
\end{equation}
be the composition factors of $F_*$, and let
\begin{equation}\label{vi2}
V_{i}^2:=F_{i-1}/F_{i+1},\,\,\,1\le i\le n.
\end{equation}
Since the filtration $T_*$ is $S_n-$equivariant, we have
\begin{equation}\label{vi1}
V_{i}\cong c_{\lambda}\left(T_{i}/T_{i+1}\right)\cong
\bigoplus_{\mu\vdash i,\,\nu\vdash
n-i}N_{\mu,\nu}^{\lambda}(\mathbb{S}_{\mu} A\ot \mathbb{S}_{\nu} B),
\end{equation}
where $N_{\mu,\nu}^{\lambda}:=[Res^{S_{n}}_{S_{i}\times
S_{n-i}}V_{\lambda}:V_{\mu}\ot V_{\nu}]$ are the
Littlewood-Richardson coefficients (see e.g., \cite{fh}).

For each integer $0\le i\le n$, let $\lambda-i$ denote the set of
all partitions of $n-i$ whose Young diagram is obtained from that of
$\lambda$ after deleting $i$ boxes (by convention, $\lambda-n$
consists of one element $(0)$). By the Littlewood-Richardson rule
(see e.g., \cite{fh}), $N_{\mu,\nu}^{\lambda}=0$ if
$\mu\notin\lambda-(n-i)$ or $\nu\notin\lambda-i$. Therefore,
\begin{equation}\label{vi11}
V_{i}\cong \bigoplus_{\mu\in \lambda-(n-i),\,\nu\in
\lambda-i}N_{\mu,\nu}^{\lambda}(\mathbb{S}_{\mu} A\ot
\mathbb{S}_{\nu} B).
\end{equation}
(However, $N_{\mu,\nu}^{\lambda}$ can still equal $0$ for some pairs
$\mu\in\lambda-(n-i)$, $\nu\in\lambda-i$, e.g., for $\lambda=(2,2)$,
$N_{(1^2),(2)}^{(2,2)}=0$.)

Observe also that for any $\mu'\in \lambda-(n-i+1)$, $\mu\in
\lambda-(n-i)$ and $\nu\in \lambda-i$, $c_{\mu}$ defines a morphism
$$c_{\mu}\ot id_{\mathbb{S}_{\nu} B}
:\mathbb{S}_{\mu'} A\ot V\ot \mathbb{S}_{\nu} B\to V_{i}^2.$$

Since $V_{i}^2$ is a subquotient of $\mathbb{S}_{\lambda}V$, the
following lemma is clear.
\begin{lemma}\label{splits}
If $\mathbb{S}_{\lambda}V$ is semisimple then the exact sequence
\begin{equation}\label{(V^2)}
(V_{i}^2):\,\,\,\, 0 \xrightarrow{} V_{i}\xrightarrow{} V_{i}^2
\xrightarrow{} V_{i-1}\xrightarrow{} 0
\end{equation}
splits for any $1\le i\le n$. \qed
\end{lemma}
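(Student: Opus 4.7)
The statement is essentially a formal consequence of semisimplicity, and the proof should be quite short. The plan is to observe that $V_i^2 = F_{i-1}/F_{i+1}$ is a subquotient of $\mathbb{S}_{\lambda}V$, and then to use the fact that in any $k$-linear abelian category, both subobjects and quotient objects of a semisimple object are again semisimple, so that in particular any short exact sequence with semisimple middle term splits.

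More concretely, I would first note that $F_{i-1}$ is a subobject of $\mathbb{S}_{\lambda}V$. Assuming $\mathbb{S}_{\lambda}V$ is semisimple, $F_{i-1}$ is a direct summand of $\mathbb{S}_{\lambda}V$ and is therefore itself semisimple. Then $V_i^2 = F_{i-1}/F_{i+1}$, being a quotient of a semisimple object, is semisimple. Now $V_i = F_i/F_{i+1}$ sits inside $V_i^2$ as a subobject, and the quotient $V_i^2/V_i$ is precisely $F_{i-1}/F_i = V_{i-1}$. Since $V_i^2$ is semisimple, its subobject $V_i$ admits a complement, which provides a splitting of the exact sequence $(V_i^2):\ 0 \to V_i \to V_i^2 \to V_{i-1} \to 0$.

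The only step requiring any comment is the passage from the semisimplicity of $\mathbb{S}_{\lambda}V$ to that of its subquotient $V_i^2$. This is a standard fact in any abelian category and uses only that the category is $k$-linear abelian with every object of finite length (as assumed at the end of the introduction): a subobject of a semisimple object is a direct summand, and a quotient of a semisimple object is a direct sum of images of its simple summands. I do not anticipate any obstacle here, and no special features of the Schur functor $\mathbb{S}_{\lambda}$ or of the symmetric braiding enter the argument; the lemma is purely a statement about filtrations in an abelian category with a semisimple top.
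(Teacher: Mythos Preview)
Your proposal is correct and matches the paper's own approach: the paper simply remarks that $V_i^2$ is a subquotient of $\mathbb{S}_\lambda V$ and declares the lemma clear, which is exactly the observation you spell out. No additional ideas are needed.
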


\subsection{The semisimlicity of $V$.}
Let $1\le i\le n$ be an integer,
$\mu'\in \lambda-(n-i+1)$ and $\nu\in \lambda-i$. Tensoring our
exact sequence $(V)$ by $\mathbb{S}_{\mu'}A$ on the left yields the
extension
\begin{equation}\label{exseq1}
E_1:\,\,\,\, 0\xrightarrow{} \mathbb{S}_{\mu'}A\ot A\xrightarrow{}
\mathbb{S}_{\mu'}A\ot V\xrightarrow{}\mathbb{S}_{\mu'}A\ot
B\xrightarrow{}0.
\end{equation}
Tensoring $E_1$ by $\mathbb{S}_{\nu}B$ on the right yields the
extension
\begin{equation}\label{exseq2}
E_2:\,\,\,\, 0\xrightarrow{} \mathbb{S}_{\mu'}A\ot A\ot
\mathbb{S}_{\nu}B\xrightarrow{} \mathbb{S}_{\mu'}A\ot V\ot
\mathbb{S}_{\nu}B\xrightarrow{}\mathbb{S}_{\mu'}A\ot B\ot
\mathbb{S}_{\nu}B\xrightarrow{}0.
\end{equation}

Set
\begin{equation}\label{munu}
\mu'_+:= \{\mu\in\mu'+1\mid N_{\mu,\nu}^{\lambda}\ne 0\},\,\,\,
\nu_+:= \{\nu'\in\nu+1\mid N_{\mu',\nu'}^{\lambda}\ne 0\}.
\end{equation}

The following lemma is clear.

\begin{lemma}\label{l}
Let $1\le i\le n$ be an integer, and let $\mu'\in \lambda-(n-i+1)$,
$\nu\in \lambda-i$. Then for any $\mu\in \mu'_+$ and $\nu'\in
\nu_+$, the triple $(c_{\mu}\ot id_{\mathbb{S}_{\nu}B},c_{\mu}\ot
id_{\mathbb{S}_{\nu}B},id_{\mathbb{S}_{\mu'}A}\ot c_{\nu'})$ defines
a morphism of extensions $E_2\to (V_{i}^2)$:
\begin{equation*}
\begin{CD} 0 @>>> \mathbb{S}_{\mu'}A\ot A\ot
\mathbb{S}_{\nu}B @>>> \mathbb{S}_{\mu'}A\ot V\ot \mathbb{S}_{\nu}B
@>>> \mathbb{S}_{\mu'}A\ot B\ot \mathbb{S}_{\nu}B @>>> 0\\
@. @VVc_{\mu}\ot id_{\mathbb{S}_{\nu}B}V @VVc_{\mu}\ot
id_{\mathbb{S}_{\nu}B}V @VVid_{\mathbb{S}_{\mu'}A} \ot c_{\nu'}V @.\\
0 @>>> V_i @>>> V_{i}^2 @>>> V_{i-1}@>>> 0.
\end{CD}
\end{equation*}
\qed
\end{lemma}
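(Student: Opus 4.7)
The plan is to verify the two squares of the diagram commute separately, since the rows are already short exact by construction: the top row $E_2$ is obtained from $(V)$ by tensoring on the left by $\mathbb{S}_{\mu'}A$ and on the right by $\mathbb{S}_{\nu}B$, while the bottom row $(V_i^2)$ comes from the filtration $F_*$ on $\mathbb{S}_{\lambda}V$.

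For the left square, I would argue commutativity from the naturality of the Young symmetrizer $c_{\mu}\in k[S_i]$ acting on the first $i$ tensor slots with respect to the inclusion $A\hookrightarrow V$. Regardless of whether one first includes $A$ into $V$ (via the top horizontal) and then applies $c_{\mu}$, or first applies $c_{\mu}$ and then passes along $V_i\hookrightarrow V_i^2$ (via the bottom horizontal), one ends up in the same summand $\mathbb{S}_{\mu}A\ot\mathbb{S}_{\nu}B\subseteq V_i\subseteq V_i^2$; the hypothesis $\mu\in\mu'_+$ is used only to ensure via (\ref{vi11}) that this summand actually appears in $V_i$.

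For the right square, I would work at the level of $V^{\ot n}$ and the filtration $T_*$ before applying $c_{\lambda}$. The projection $V_i^2=F_{i-1}/F_{i+1}\to F_{i-1}/F_i=V_{i-1}$ corresponds, on the associated graded, to replacing the middle $V$-factor by its quotient $B=V/A$, since keeping the $A$-part would place $i$ copies of $A$ in total and hence land in $T_i\subseteq F_i$. After this replacement, both paths around the square factor through $\mathbb{S}_{\mu'}A\ot B\ot\mathbb{S}_{\nu}B$, and the remaining task is to check that $c_{\mu}$ (acting on the first $i$ slots, with the $i$-th now a $B$) and $c_{\nu'}$ (acting on the last $n-i+1$ slots, which include the middle $B$) pick out the same $\mathbb{S}_{\mu'}A\ot\mathbb{S}_{\nu'}B$-component inside $V_{i-1}=c_{\lambda}(T_{i-1}/T_i)$, which is nonzero precisely when $\nu'\in\nu_+$.

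The main obstacle is the right square: one must track the two Young symmetrizers carefully through the associated graded of $T_*$ and verify that the two symmetrization procedures---on the first $i$ slots versus on the last $n-i+1$ slots---produce the same element in the common Littlewood-Richardson summand. This should reduce to a direct but slightly tedious computation with the $S_n$-isotypic decomposition of $V_{i-1}$ and the defining projection property of $c_{\mu}$ and $c_{\nu'}$, together with the $S_n$-equivariance of the filtration $T_*$ noted in the construction of $F_*$.
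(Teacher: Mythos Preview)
The paper gives no proof of this lemma at all: it is stated with a \qed\ immediately following and is introduced by ``The following lemma is clear.'' So there is no paper argument to compare against; you are supplying details where the author declines to.

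Your outline for the left square is fine: commutativity there really is just functoriality of the Young symmetrizer $c_\mu$ with respect to the inclusion $A\hookrightarrow V$ in the $i$th slot, together with the observation that $c_\mu(\mathbb{S}_{\mu'}A\otimes A)\otimes \mathbb{S}_\nu B$ lands in the correct summand of $V_i$ by \eqref{vi11}.

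For the right square your plan is in the right direction but is, as you yourself flag, incomplete, and the way you phrase the remaining task is slightly misleading. The point you are missing (or at least not making explicit) is that the targets $V_i$, $V_i^2$, $V_{i-1}$ are all defined as images of $c_\lambda$, so every vertical map in the diagram implicitly carries a $c_\lambda$ on the outside. Once you write both compositions around the right square as $c_\lambda\cdot c_\mu$ and $c_\lambda\cdot c_{\nu'}$ acting on the $(S_{i-1}\times S_{n-i+1})$--graded piece $\mathbb{S}_{\mu'}A\otimes B\otimes \mathbb{S}_\nu B$ inside $T_{i-1}/T_i$, the question is no longer whether $c_\mu$ and $c_{\nu'}$ ``pick out the same component'' (they act on different blocks of tensor factors and are certainly not equal as operators), but whether $c_\lambda c_\mu$ and $c_\lambda c_{\nu'}$ agree on this source. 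That identity lives entirely in $k[S_n]$ acting on the induced module $\mathrm{Ind}_{S_{i-1}\times S_{n-i+1}}^{S_n}(V_{\mu'}\otimes V_{\nu'})$, and follows from the fact that both $c_\mu c_{\mu'}\otimes c_\nu$ and $c_{\mu'}\otimes c_{\nu'}c_\nu$ project onto the same $V_{\mu'}\boxtimes V_{\nu'}$--isotypic piece after multiplying by $c_\lambda$ (using that $N^\lambda_{\mu,\nu},N^\lambda_{\mu',\nu'}\neq 0$ and the branching rule for $S_i\downarrow S_{i-1}$). If you make this explicit, the verification is indeed routine; as written, your proposal stops just short of the actual computation.
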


Fix an integer $1\le i\le n$, and $\mu'\in \lambda-(n-i+1)$, $\nu\in
\lambda-i$. For any $\mu\in \mu'_+$ and $\nu'\in \nu_+$, define the
following two subsets of the ground field $k$:
\begin{equation}\label{ai}
A_i(\mu',\mu,\nu):=\{d\mid p_{\mu',\mu}(d)=0\}\subseteq k
\end{equation}
and
\begin{equation}\label{bi}
B_i(\mu',\nu,\nu'):=\{d\mid p_{\nu,\nu'}(d)=0\}\subseteq k.
\end{equation}

\begin{example}\label{extreme}
By convention, $\lambda-n=\{(0)\}$. Therefore, for any $\mu',\nu\in
\lambda-1$, we have that
$A_1((0),(1),\nu)=B_n(\mu',(0),(1))=\emptyset$. On the other
extreme, by Corollary \ref{cor1},
$B_1((0),\nu,\lambda)=A_n(\mu',\lambda,(0))=\{1-q(\lambda),\dots,p(\lambda)-1\}$
if $\lambda$ is not a hook, and
$B_1((0),\nu,\lambda)=A_n(\mu',\lambda,(0))
=\{1-q(\lambda),\dots,-1,1,\cdots,p(\lambda)-1\}$ if $\lambda$ is a
hook.
\end{example}

Set $a:=\dim A$, $b:=\dim B$ for the rest of the paper.

\begin{lemma}\label{ll}
Let $1\le i\le n$ be an integer, and let $\mu'\in \lambda-(n-i+1)$,
$\nu\in \lambda-i$. Let $\mu\in \mu'_+$, and let $\nu'\in \nu_+$ be
such that $b\notin B_i(\mu',\nu,\nu')$. Then $(c_{\mu}){_*}(E_1)=0$
in $\Ext(\mathbb{S}_{\mu'}A\ot B,\mathbb{S}_{\mu}A)$.
\end{lemma}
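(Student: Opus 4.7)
The plan is to prove $(c_\mu)_*(E_1)=0$ in three steps.

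First, by Lemma \ref{l}, the triple $(c_\mu\ot\id,\,c_\mu\ot\id,\,\id\ot c_{\nu'})$ is a morphism of extensions from $E_2$ to $(V_i^2)$, so by functoriality of $\Ext$ we have $(c_\mu\ot\id_{\mathbb{S}_\nu B})_*(E_2) = (\id_{\mathbb{S}_{\mu'}A}\ot c_{\nu'})^*(V_i^2)$ in $\Ext(\mathbb{S}_{\mu'}A\ot B\ot\mathbb{S}_\nu B,\mathbb{S}_\mu A\ot\mathbb{S}_\nu B)$. Under the hypothesis that $\mathbb{S}_\lambda V$ is semisimple (in force throughout the proof of Theorem \ref{main}), Lemma \ref{splits} makes the right-hand side vanish, so $(c_\mu\ot\id_{\mathbb{S}_\nu B})_*(E_2)=0$.

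Second, note that $E_2 = E_1 \ot \mathbb{S}_\nu B$ by construction and that pushforward commutes with tensoring by an object, so the previous identity reads $G \ot \mathbb{S}_\nu B = 0$ in $\Ext(\mathbb{S}_{\mu'}A \ot B \ot \mathbb{S}_\nu B,\,\mathbb{S}_\mu A \ot \mathbb{S}_\nu B)$, where $G := (c_\mu)_*(E_1) \in \Ext(\mathbb{S}_{\mu'}A \ot B,\, \mathbb{S}_\mu A)$.

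Third, the heart of the argument: deduce $G=0$ from $G\ot\mathbb{S}_\nu B=0$. By assumption $p_{\nu,\nu'}(b) \ne 0$, so Corollary \ref{cor1} makes $\theta_{\nu,\nu'}$ a split injection on $B$; since $\C$ is symmetric, $\dim B^* = b$ and $\mathbb{S}_\alpha(B^*)\cong(\mathbb{S}_\alpha B)^*$, so the same holds on $B^*$. Via Lemma \ref{ext} with $X=B^*$, identify $G$ with an element $\tilde G \in \Ext(\mathbb{S}_{\mu'}A,\,\mathbb{S}_\mu A \ot B^*)$. By naturality of this isomorphism (and the symmetric identification $\mathbb{S}_\nu B \ot \mathbb{S}_\nu B^* \cong \mathbb{S}_\nu B^* \ot \mathbb{S}_\nu B^{**}$), the vanishing of $G\ot\mathbb{S}_\nu B$ translates to $(\id_{\mathbb{S}_\mu A \ot B^*} \ot coev_{\mathbb{S}_\nu B^*})_*(\tilde G) = 0$. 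Applying Lemma \ref{cor} with $X = B^*$, $\alpha = \nu$, $\beta = \nu'$ --- whose hypothesis is precisely the split injectivity of $\theta_{\nu,\nu'}$ on $B^*$ --- shows this pushforward is injective, whence $\tilde G = 0$ and thus $G = 0$.

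The main obstacle is the third step's bookkeeping: reconciling the Ext group naturally containing $(c_\mu)_*(E_1)$ with the form required by Lemma \ref{cor}, and verifying that the polynomial condition transfers cleanly from $B$ to $B^*$. Both are underpinned by the symmetry of $\C$ and the identifications $B^{**}\cong B$ and $\mathbb{S}_\nu B^{**}\cong\mathbb{S}_\nu B$.
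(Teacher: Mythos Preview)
Your proof is correct and follows essentially the same approach as the paper: first obtain $(c_\mu\ot\id)_*(E_2)=0$ from Lemma~\ref{l} and Lemma~\ref{splits}, then use the adjunction of Lemma~\ref{ext} to rewrite this as the vanishing of $(\id\ot coev_{\mathbb{S}_\nu B^*})_*$ applied to the transform of $(c_\mu)_*(E_1)$, and finally invoke Lemma~\ref{cor} (with $X=B^*$) to conclude. One minor point: the implication ``$p_{\nu,\nu'}(b)\ne 0\Rightarrow\theta_{\nu,\nu'}$ split injective'' is the unlabeled corollary preceding Theorem~\ref{c1} rather than Corollary~\ref{cor1}, and the paper cites Theorem~\ref{c1} here.
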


\begin{proof}
By Lemma \ref{l} and a standard fact on extensions (see e.g.,
\cite{m}), we have that $(c_{\mu}\ot id)_*(E_2)=(id\ot
c_{\nu'})^*(V_{i}^2)$. Since by Lemma \ref{splits}, $(V_{i}^2)=0$,
we have that $(c_{\mu}\ot id)_*(E_2)=0$ in
$\Ext(\mathbb{S}_{\mu'}A\ot B\ot
\mathbb{S}_{\nu}B,\mathbb{S}_{\mu}A\ot \mathbb{S}_{\nu}B)$.

Let
$$f:\Ext(\mathbb{S}_{\mu'}A\ot B,\mathbb{S}_{\mu}A)\to
\Ext(\mathbb{S}_{\mu'}A,\mathbb{S}_{\mu}A\ot B^*)$$ be the
isomorphism given by Lemma \ref{ext}, let
\begin{equation*}
\Ext(\mathbb{S}_{\mu'}A,\mathbb{S}_{\mu}A\ot B^*)\xrightarrow{(id\ot
coev_{\mathbb{S}_{\nu}B^*})_*}
\Ext(\mathbb{S}_{\mu'}A,\mathbb{S}_{\mu}A\ot B^*\ot
\mathbb{S}_{\nu}B^*\ot \mathbb{S}_{\nu}B),
\end{equation*}
and let
$$g:\Ext(\mathbb{S}_{\mu'}A,\mathbb{S}_{\mu}A\ot B^*\ot
\mathbb{S}_{\nu}B^*\ot \mathbb{S}_{\nu}B)\to
\Ext(\mathbb{S}_{\mu'}A\ot B\ot
\mathbb{S}_{\nu}B,\mathbb{S}_{\mu}A\ot \mathbb{S}_{\nu}B)$$ be the
isomorphism given by Lemma \ref{ext} (composed with the appropriate
commutativity constraints). Then, it is straightforward to verify
that
$$0=(c_{\mu}\ot id)_*(E_2)=\left(g\circ (id\ot
coev_{\mathbb{S}_{\nu}B^*})_*\circ f\right)((c_{\mu}){_*}(E_1)).$$

Now, by our assumption on $b$ and Theorem \ref{c1}, the morphism
$\theta_{B^*,n-i+1,\nu,\nu'}$ is a split injection. Therefore, by
Lemma \ref{cor}, $(id\ot coev_{\mathbb{S}_{\nu}B^*})_*$ is
injective, and the result follows.
\end{proof}

We are now ready to prove the key proposition for the proof of
Theorem \ref{main}.
\begin{proposition}\label{mainp}
Assume there exist an integer $1\le i\le n$, a pair of partitions
$\mu'\in \lambda-(n-i+1)$, $\nu\in \lambda-i$ and a pair of
partitions $\mu\in \mu'_+$, $\nu'\in \nu_+$, such that $a\notin
A_i(\mu',\mu,\nu)$ and $b\notin B_i(\mu',\nu,\nu')$. Then $(V)=0$ in
$\Ext(B,A)$.
\end{proposition}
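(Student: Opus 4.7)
The plan is to combine the two hypotheses in a two-step argument that mirrors the structure of Lemma \ref{ll}. First I would apply Lemma \ref{ll} directly, using the assumption $b\notin B_i(\mu',\nu,\nu')$, to obtain that $(c_\mu){_*}(E_1)=0$ in $\Ext(\mathbb{S}_{\mu'}A\ot B,\mathbb{S}_\mu A)$. This takes care of the "right-hand" side $B$ by the same pushforward/pullback trick used in the proof of that lemma.

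Second, I would exploit the other assumption $a\notin A_i(\mu',\mu,\nu)$, which by definition says $p_{\mu',\mu}(a)\ne 0$. Since $\dim A=a$, the Corollary following Proposition \ref{c2} guarantees that the morphism $\theta_{A,|\mu|,\mu',\mu}:A\to\mathbb{S}_\mu A\ot\mathbb{S}_{\mu'}A^*$ is a split injection, and therefore the pushforward
$$(\theta_{\mu',\mu})_*:\Ext(B,A)\longrightarrow \Ext(B,\mathbb{S}_\mu A\ot\mathbb{S}_{\mu'}A^*)$$
is an injective $k$-linear map.

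The bridge between these two facts --- and the main technical step of the argument --- is to identify $(c_\mu)_*(E_1)$ with $(\theta_{\mu',\mu})_*(V)$ under the isomorphism of Lemma \ref{ext} applied with $X=\mathbb{S}_{\mu'}A^*$ (composed with the appropriate symmetric commutativity constraints). Unwinding the factorization $\theta_{\mu',\mu}=(c_\mu\ot c_{\mu'})\circ(id_A\ot coev_{\mathbb{S}_{\mu'}A})$, one notes that tensoring $(V)$ by $\mathbb{S}_{\mu'}A$ on the left is exactly $E_1$, and that the subsequent pushforward by $c_\mu$ translates, under the Lemma \ref{ext} isomorphism, into the pushforward by $c_\mu\ot c_{\mu'}$ of the $coev$-tensored extension, i.e.\ into $(\theta_{\mu',\mu})_*(V)$. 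Granting this identification, the vanishing $(c_\mu)_*(E_1)=0$ from Step 1 transports to $(\theta_{\mu',\mu})_*(V)=0$, whence injectivity of $(\theta_{\mu',\mu})_*$ from Step 2 forces $(V)=0$.

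The main obstacle is the compatibility claim in the previous paragraph: it is a routine but somewhat intricate diagram chase tracking the Lemma \ref{ext} isomorphism together with the symmetry constraints, and verifying that the map $(V)\mapsto(c_\mu)_*(E_1)$ really factors through $(\theta_{\mu',\mu})_*$ up to this isomorphism. Essentially, the "tensor-then-project-by-Young-symmetrizer" recipe used on the $B$-side in Lemma \ref{ll} is being applied here on the $A$-side, and once the naturality of $coev$ and of the pairing $\widetilde{ev}$ is put in, the two constructions become manifestly the same.
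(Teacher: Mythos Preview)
Your proposal is correct and follows essentially the same approach as the paper: the paper organizes exactly your three steps into a single commutative square (with $f=(id_A\ot coev_{\mathbb{S}_{\mu'}A})_*$, $g=(c_\mu\ot id)_*$, $gf=(\theta_{\mu',\mu})_*$, and the vertical isomorphisms from Lemma \ref{ext}), so that your ``bridge'' identification is precisely the commutativity of that square, and then concludes from Lemma \ref{ll} and the injectivity of $(\theta_{\mu',\mu})_*$ just as you do.
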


\begin{proof}
By Theorem \ref{c1}, the morphisms $\theta_{A,i-1,\mu',\mu}$ and
$\theta_{B,n-i+1,\nu,\nu'}$ are split injections. Consider now the
following commutative diagram:
\begin{equation*}
\begin{CD} \Ext(B,A) @> f>> \Ext(B,A\ot \mathbb{S}_{\mu'}A\ot
\mathbb{S}_{\mu'}A^*) @> g>> \Ext(B,
\mathbb{S}_{\mu}A\ot \mathbb{S}_{\mu'}A^*) \\
@. @VV\cong V @VV\cong V \\
@. \Ext(B\ot \mathbb{S}_{\mu'}A,A\ot \mathbb{S}_{\mu'}A)
@>(c_{\mu}){_*}>> \Ext(B\ot \mathbb{S}_{\mu'}A, \mathbb{S}_{\mu}A),
\end{CD}
\end{equation*}
where $f:=(id_A\ot coev_{\mathbb{S}_{\mu'}A})_*$, $g:=(c_{\mu}\ot
id_{\mathbb{S}_{\mu'}A^*})_*$, and the two vertical isomorphisms are
given by Lemma \ref{ext}. Observe that
$gf=(\theta_{A,i-1,\mu',\mu})_*$ is injective. It is now clear that
the proposition follows from Lemma \ref{cor} and Lemma \ref{ll}.
\end{proof}

\section{The proof of Theorem \ref{main}}

\subsection{$F(\lambda)\subseteq G(\lambda)$:}
We have to show that if $d\in F(\lambda)$ then $d\in G(\lambda)$,
i.e., that there exists a symmetric rigid tensor category $\C$ over
$k$ with a non-semisimple object $V$ of dimension $d$ such that
$\mathbb{S}_{\lambda}V$ is semisimple. This follows from the
following two observations.

1) Let $r,\,s$ be nonnegative integers such that $r+s\ge 2$. One can
introduce on the superspace $V:=\mathbb{C}^{r\mid s}$ a structure of
a nonsemisimple representation of some supergroup (e.g., the
supergroup of upper triangular matrices). On the other hand, if
$\lambda$ contains a box $(r+1,s+1)$ then $\mathbb{S}_{\lambda}V=0$
(see e.g., \cite{d2}), so $\mathbb{S}_{\lambda}V$ is automatically
semisimple while $V$ is not.

2) Suppose $\lambda=(q^p)$ is a rectangle. If $V$ is a nonsemisimple
group representation of dimension $p>1$, then $\mathbb{S}_\lambda
V=(\wedge^{p}V) ^{\otimes q}$ is $1-$dimensional, so is
automatically semisimple, while $V$ is not. Finally, for $-q$, use
now Remark \ref{trick}.

We are done.

\subsection{$G(\lambda)\subseteq F(\lambda)$:} Let $\C$ be any symmetric rigid
tensor category over $k$, and let $V\in\C$ be an object of $\C$. We
have to show that if $\dim V\notin F(\lambda)$ and
$\mathbb{S}_{\lambda}V$ is semisimple then so is $V$ (i.e., $\dim
V\notin G(\lambda)$). To this end, it is enough to show that if
$\dim V\notin F(\lambda)$ then there exist an integer $1\le i\le n$,
a pair of partitions $\mu'\in \lambda-(n-i+1)$, $\nu\in \lambda-i$,
and a pair of partitions $\mu\in \mu'_+$, $\nu'\in \nu_+$,
satisfying the conditions of Proposition \ref{mainp}.

Let $\lambda^*$ denote the conjugate of $\lambda$. Write
$\lambda=(\lambda_1,\dots,\lambda_p)$ and
$\lambda^*=(\lambda'_1,\dots,\lambda'_q)$, where $q=\lambda_1
\ge\cdots\ge\lambda_p\ge 1$ and $p=\lambda'_1
\ge\cdots\ge\lambda'_q\ge 1$.

\subsubsection{\textbf{The general case.}} In this subsection we prove that if
$\dim V\notin R(\lambda)$ then the exact sequence $(V)$ splits.

If $i=1$, $A_1((0),(1),\nu)=\emptyset$ for any $\nu\in \lambda-1$
(see Example \ref{extreme}), so there is no condition on $a$.
Therefore, if $b$ {\em is not} equal in $k$ to an element of
$B_1((0),(1),\nu)$ for some $\nu\in \lambda-1$, we are done. So
suppose $b$ is equal in $k$ to some element of
$$B_1((0),(1),\nu)=\{1-q,\dots,p-1\},$$ which we shall continue to
denote by $b$ (so now $b\in \mathbb{Z}$).

\textbf{\emph{Subcase 1.}} Suppose that $b>0$, and set $i:=p-b+1$;
then $2\le i \le p$. Let $\mu':=(\lambda_{p-i+1},\dots,\lambda_p-1)$
be the last $i$ rows of $\lambda$ without the last box, and let
$\nu:=(\lambda_{1},\dots,\lambda_{p-i})$ be the first $p-i$ rows of
$\lambda$. Let $\mu:=(\lambda_{p-i+1},\dots,\lambda_p)$ and let
$\nu':=(\lambda_{1},\dots,\lambda_{p-i},1)$. It follows easily from
the Littlewood-Richardson rule that $\mu\in\mu'_+$ and
$\nu'\in\nu_+$.

We now use (\ref{new}) to find out that
\begin{equation}\label{a1}
A_{n_i}(\mu',\mu,\nu)= \{d\in k\mid p_{\mu',\mu}(d)=0\}=
\{1-\lambda_{p-i+1},\dots,i-1\}
\end{equation}
and
\begin{equation}\label{b1}
B_{n_i}(\mu',\nu,\nu')= \{d\in k\mid p_{\nu,\nu'}(d)=0\}=
\{1-q,\dots,p-i\},
\end{equation}
where $n_i:=\sum_{j=p-i+1}^p \lambda_j$. We therefore see that
$b=p-i+1\notin B_{n_i}(\mu',\nu,\nu')$. Now, if $a\notin
A_{n_i}(\mu',\mu,\nu)$, we are done. Otherwise, we are done by our
assumption on $\dim V$ (since $\dim V=a+b$).

\textbf{\emph{Subcase 2.}} Suppose that $b=0$. Then $0\notin
B_n(\mu',(0),(1))=\emptyset$ for any $\mu'\in \lambda-1$. Now, if
$a\notin A_n(\mu',(0),(1))$, we are done. Otherwise, we are done by
our assumption on $\dim V$.

\textbf{\emph{Subcase 3.}} Suppose that $b<0$, and set $i:=q+b+1$;
then $2\le i \le q$. Let
$\mu':=(\lambda'_{q-i+1},\dots,\lambda'_q-1)$ be the last $i$
columns of $\lambda$ without the last box,
and let $\nu:=(\lambda'_{1},\dots,\lambda'_{q-i})$ be the first
$q-i$ columns of $\lambda$. Let
$\mu:=(\lambda'_{q-i+1},\dots,\lambda'_q)$ and let
$\nu':=(\lambda'_{1},\dots,\lambda'_{q-i},1)$. It follows easily
from the Littlewood-Richardson rule that $\mu\in\mu'_+$ and
$\nu'\in\nu_+$.

We now use (\ref{new}) to find out that
\begin{equation}\label{a2}
A_{n_i}(\mu',\mu,\nu)= \{d\in k\mid p_{\mu',\mu}(d)=0\}=
\{1-i,\dots,\lambda'_{q-i+1}-1\}
\end{equation}
and
\begin{equation}\label{b2}
B_{n_i}(\mu',\nu,\nu')= \{d\in k\mid p_{\nu,\nu'}(d)=0\}=
\{i-q,\dots,p-1\},
\end{equation}
where $n_i:=\sum_{j=q-i+1}^q \lambda'_j$. We therefore see that
$b=i-q-1\notin B_{n_i}(\mu',\nu,\nu')$. Now, if $a\notin
A_{n_i}(\mu',\mu,\nu)$, we are done. Otherwise, we are done by our
assumption on $\dim V$.

\subsubsection{\textbf{The non-rectangle case.}} Assume $\lambda$ is not a
rectangle. We have to show that $\dim V=p$ and $\dim V=-q$ are
allowed.
Let $b,\,i$ be as in Subcase 1 of 5.2.1.

Let $\mu':=(\lambda_{p-i+2},\dots,\lambda_p)$ and
$\nu:=(\lambda_{1},\dots,\lambda_{p-i+1}-1)$ be the last $i-1$ rows
of $\lambda$ and the first $p-i+1$ rows of $\lambda$ without the
last box, respectively. Choose $\mu\in \mu'_+$ with $i-1$ rows (it
exists since $\lambda$ is not a rectangle!) and let
$\nu':=(\lambda_{1},\dots,\lambda_{p-i+1})$. It follows easily from
the Littlewood-Richardson rule that $\nu'\in\nu_+$. Moreover, we now
have that $A_{n_i}(\mu',\mu,\nu)= \{1-\lambda_{p-i+2},\dots,i-2\}$,
where $n_i:=1+\sum_{j=p-i+2}^p \lambda_j$. Hence, $i-1\notin
A_{n_i}(\mu',\mu,\nu)$ and $b=p-i+1\notin B_{n_i}(\mu',\nu,\nu')$.
We thus conclude that $\dim V=p$ is allowed in this case, as
claimed.

The claim that $\dim V=-q$ is allowed follows now from Remark
\ref{trick}.

\subsubsection{\textbf{The case $(3,2)\notin \lambda$ or $(2,3)\notin
\lambda$.}} Suppose $(3,2)\notin \lambda$. We have to show that
$\dim V=1$ is allowed.

\textbf{\emph{Subcase 1.}} Let $b,\,i$ be as in Subcase 1 of 5.2.1.

First note that for $2\le i\le p-2$, $\lambda_{p-i+1}=1$. Hence
$b\notin B_{n_i}(\mu',\mu,\nu)$ and $1-b\notin
A_{n_i}(\mu',\mu,\nu)$ (see (\ref{a1}), (\ref{b1})).

Now, for $i=p-1$ (so $b=2$), take $\mu':=(1^{p-1})$, $\mu:=(1^{p})$,
$\nu:=(q-1,\lambda_2-1)$ and $\nu':=(q,\lambda_2-1)$. It follows
easily from the Littlewood-Richardson rule that $\mu\in\mu'_+$ and
$\nu'\in\nu_+$. Moreover, $-1\notin A_p(\mu',\mu,\nu)$ and $2\notin
B_p(\mu',\nu,\nu')$.

For $i=p$ (so $b=1$), take $\mu':=(q,1^{p-2})$, $\mu:=(q,1^{p-1})$,
$\nu:=(\lambda_2-1)$ and $\nu':=(\lambda_2)$. It follows easily from
the Littlewood-Richardson rule that $\mu\in\mu'_+$ and
$\nu'\in\nu_+$. Moreover, $0\notin A_{p+q-1}(\mu',\mu,\nu)$ and
$1\notin B_{p+q-1}(\mu',\nu,\nu')$.

\textbf{\emph{Subcase 2.}} Let $b,\,i$ be as in Subcase 2 of 5.2.1.

Take $\mu':=(\lambda_2-1)$, $\mu:=(\lambda_2)$, $\nu:=(q,1^{p-2})$
and $\nu':=(q,1^{p-1})$. It follows easily from the
Littlewood-Richardson rule that $\mu\in\mu'_+$ and $\nu'\in\nu_+$.
Moreover, $1\notin A_{\lambda_2}(\mu',\mu,\nu)$ and $0\notin
B_{\lambda_2}(\mu',\nu,\nu')$.

\textbf{\emph{Subcase 3.}} Let $b,\,i$ be as in Subcase 3 of
Subsection 5.2.1.

First note that for $2\le i\le q-2$, $\lambda'_{q-i+1}=1$. Hence
$b\notin B_{n_i}(\mu',\mu,\nu)$ and $1-b\notin
A_{n_i}(\mu',\mu,\nu)$ (see (\ref{a2}), (\ref{b2})).

Now, for $i=q-1,\,q$ (so $b=-2,\,-1$), take
$\mu':=(\lambda_1,\lambda_2-1)$, $\mu:=(\lambda_1,\lambda_2)$,
$\nu:=(1^{p-2})$ and $\nu':=(1^{p-1})$. It follows easily from the
Littlewood-Richardson rule that $\mu\in\mu'_+$ and $\nu'\in\nu_+$.
Moreover, $2,3\notin A_{n+2-p}(\mu',\mu,\nu)$ and $-1,-2\notin
B_{n+2-p}(\mu',\nu,\nu')$.

We therefore conclude that $\dim V=1$ is allowed in this case, as
claimed.

Finally, the claim that $\dim V=-1$ is allowed in the case
$(2,3)\notin \lambda$ follows now from Remark \ref{trick}.

\subsubsection{\textbf{The hook case.}} Assume $\lambda$ is a hook. We have to
show that $\dim V=0$ is allowed.

\textbf{\emph{Subcase 1.}} Let $b,\,i$ be as in Subcase 1 of 5.2.1.

Since $\lambda_{p-i+1}=1$ for $i<p$, we get from (\ref{a1}) that
$-b\notin A_{n_i}(\mu',\mu,\nu)$. On the other hand, for $i=p$ (so
$b=1$), take $\mu':=(1^{p-1})$, $\mu:=(1^{p})$, $\nu:=(q-1)$ and
$\nu':=(q)$. It follows easily from the Littlewood-Richardson rule
that $\mu\in\mu'_+$ and $\nu'\in\nu_+$. Moreover, $-1\notin
A_p(\mu',\mu,\nu)$ and $1\notin B_p(\mu',\nu,\nu')$.

\textbf{\emph{Subcase 2.}} Let $b,\,i$ be as in Subcase 3 of 5.2.1.

Since $\lambda'_{q-i+1}=1$ for $i<q$, we get from (\ref{a2}) that
$-b\notin A_{n_i}(\mu',\mu,\nu)$. On the other hand, for $i=q$ (so
$b=-1$), take $\mu':=(q-1)$, $\mu:=(q)$, $\nu:=(1^{p-1})$ and
$\nu':=(1^{p})$. It follows easily from the Littlewood-Richardson
rule that $\mu\in\mu'_+$ and $\nu'\in\nu_+$. Moreover, $1\notin
A_q(\mu',\mu,\nu)$ and $-1\notin B_q(\mu',\nu,\nu')$.

We therefore conclude that $\dim V=0$ is allowed in this case, as
claimed.

This concludes the proof of the theorem. \qed

\end{document}